\newtheorem{theorem}{Theorem}[section]
\newtheorem{lemma}{Lemma}[section]
\newtheorem{corollary}{Corollary}[section]
\newtheorem{proposition}{Proposition}[section]
\newtheorem{remark}{Remark}[section]
\newtheorem{definition}{Definition}[section]
\newcommand{\bx}{\boldsymbol{x}}
\newcommand{\ba}{\mbox{\boldmath{$a$}}}
\newcommand{\bi}{\mbox{\boldmath{$i$}}}
\newcommand{\bj}{\mbox{\boldmath{$j$}}}
\newcommand{\bk}{\mbox{\boldmath{$k$}}}
\newcommand{\bq}{\mbox{\boldmath{$q$}}}
\newcommand{\bp}{\mbox{\boldmath{$p$}}}
\newcommand{\bA}{\mbox{\boldmath{$A$}}}
\newcommand{\bF}{\mbox{\boldmath{$F$}}}
\newcommand{\bH}{\mbox{\boldmath{$H$}}}
\newcommand{\mR}{\mathbb{R}}
\newcommand{\mH}{\mathbb{H}}
\newcommand{\re}{\mbox{\rm Re~}}
\newcommand{\im}{\mbox{\rm Im~}}
\title{On Approximation Algorithms for Commutative Quaternion Polynomial Optimization}
\date{}
\author{
{Chang He} \thanks{Research Institute for Interdisciplinary Sciences, Shanghai University of Finance and Economics, 
             Shanghai 200433, People’s Republic of China. \texttt{ischanghe@gmail.com}}
\and 
{Bo Jiang} \thanks{Research Institute for Interdisciplinary Sciences, Shanghai University of Finance and Economics, 
             Shanghai 200433, People’s Republic of China. \texttt{isyebojiang@gmail.com}}
\and
{Hongye Wang} \thanks{Research Institute for Interdisciplinary Sciences, Shanghai University of Finance and Economics, 
              Shanghai 200433, People’s Republic of China. \texttt{ishongyewang@gmail.com}}
\and
{Xihua Zhu} \thanks{Faculty of Business Information, Shanghai Business School, Shanghai 200235, People’s Republic of China. \texttt{simexihuazhu@163.com}}
}
\begin{document}
%\subtitle{Using  the  LaTex Template}

% \author{Chang He     \and
%         Bo Jiang     \and
%         Hongye Wang  \and
%         Xihua Zhu}

% \institute{  Chang He \at
%              Research Institute for Interdisciplinary Sciences, Shanghai University of Finance and Economics, 
%              Shanghai 200433, People’s Republic of China\\
%              ischanghe@gmail.com
%            \and
%               Bo Jiang  \at
%               Research Institute for Interdisciplinary Sciences, School of Information Management and Engineering, Shanghai University of Finance and Economics, Shanghai 200433, People’s Republic of China\\
%               isyebojiang@gmail.com
%            \and
%               Hongye Wang, Corresponding author \at
%               Research Institute for Interdisciplinary Sciences, Shanghai University of Finance and Economics, 
%               Shanghai 200433, People’s Republic of China\\
%              ishongyewang@gmail.com
%            \and
%               Xihua Zhu \at
%               Faculty of Business Information, Shanghai Business School, Shanghai 200235, People’s Republic of China\\
%               simexihuazhu@163.com
% }

% \date{Received: date / Accepted: date}
%The correct dates will be entered by the editor.

\maketitle

\begin{abstract}
Quaternion optimization has attracted significant interest due to its broad applications, including color face recognition, video compression, and signal processing. 
% However, there is currently a lack of research on algorithms for solving quaternion polynomial optimization problems of arbitrary degree. This paper addresses this gap by delving into a homogeneous polynomial optimization under a spherical constraint within the commutative quaternion domain. Since the given quaternion polynomial optimization problem is NP-hard, we propose a polynomial-time approximation algorithm with a guaranteed ratio by leveraging tensor relaxation and random sampling techniques. Furthermore, we demonstrate that the quaternion polynomial optimization model effectively tackles the best rank-one tensor approximation problem, highlighting the capabilities of our model.
Despite the growing literature on quadratic and matrix quaternion optimization, to the best of our knowledge, the study on quaternion polynomial optimization still remains blank. In this paper, we introduce the first investigation into this fundamental problem, and focus on the sphere-constrained homogeneous polynomial optimization over the commutative quaternion domain, which includes the best rank-one tensor approximation as a special case.
Our study proposes a polynomial-time randomized approximation algorithm that employs tensor relaxation and random sampling techniques to tackle this problem. Theoretically, we prove an approximation ratio for the algorithm providing a worst-case performance guarantee.
\end{abstract}
\textbf{Keywords:}
commutative quaternion; homogeneous polynomial optimization;
approximation algorithm; probability bound; random sampling.
% \keywords{commutative quaternion \and homogeneous polynomial optimization \and approximation algorithm \and probability bound \and random sampling}
% \subclass{90C26 \and 90C30 \and 15A69}

% %All acknowledgments should be placed in the back of the paper after Conclusions.

\section{Introduction}

Quaternions \cite{parcollet2019quaternion,hamilton1853lectures}, renowned for the powerful representation capabilities, have been extensively explored in various fields \cite{liu2023quaternion,wang2023quaternion,miron2023quaternions,fu2020dual,flamant2021general,flamant2019time,flamant2018complete,heller2014hand}, along with the development of solution methods \cite{qi2020quaternion,qi2022quaternion,chen2020low,chen2022color,chen2024regularization,qi2023standard,hadi2024se,ling2022minimax,cui2024power,lyu2024randomized,jia2024new,pan2023block,chen2023phase,he2023eigenvalues,he2025eigenvalues}. However, the non-commutative nature of quaternions render operations such as Fourier transformation \cite{hitzer2007quaternion}, convolution, and correlation tedious to implement, limiting further applications. To overcome this limitation, Segre \cite{segre1892real} proposed the \textit{commutative quaternions}
$$\mathbb{H}=\{\boldsymbol{q}=q_0+q_1\bi+q_2\bj+q_3\bk: q_0,q_1,q_2,q_3\in \mR\},
$$
where the imaginary units $\bi, \bj, \bk$ satisfy the following multiplication rules:
$$\bi^2=\bk^2=-1,\ \bj^2=1,\ \bi\bj\bk=-1,\ \bi\bj=\bj\bi=\bk,\ \bj\bk=\bk\bj=\bi, \ \bk\bi=\bi\bk=-\bj.
$$
This formulation endows commutative quaternions with distinctive properties, including zero-divisors and isotropic elements, and most notably, allows for commutative multiplication. These advantages have motivated the application of commutative quaternions in various fields, including signal processing \cite{pei2004commutative,borio2023bicomplex,grigoryan2022commutative,he2023eigenvalues,he2025eigenvalues}, neural networks \cite{isokawa2010commutative,isokawa2012quaternionic,xia2020some,takahashi2022remarks,kobayashi2018twin,kobayashi2020hopfield}, and others \cite{zhang2020algebraic,atali2023new,hitzer2021quaternion,he2022decomposition}. Concurrently, fundamental theory of commutative quaternion has developed a lot during recent years \cite{kosal2014commutative,szynal2022generalized,ding2024algebraic,chen2023eta,zhang2024singular,kosal2019universal,kosal2017some}. The increasing number of applications modeled by commutative quaternion and the studies on associated problems call for a deeper understanding of the commutative quaternion structure. In this paper, we focus on a specific type of problem: \textit{polynomial optimization} over the commutative quaternion domain.

Polynomial optimization \cite{anjos2011handbook,li2012approximation,jiang2013polynomial} is a popular research topic in mathematical optimization due to its broad applications and elegant theoretical results. As most polynomial optimization problems are NP-hard, on the front of approximate solutions, the design of related approximation algorithms with performance guarantees is also appealing \cite{yang2022approximation,mao2022several,mao2022best}. Currently, the polynomial optimization models under consideration are mostly in the domain of real and complex numbers. In the real domain, Luo and Zhang \cite{luo2010semidefinite} conducted approximation algorithms for quartic polynomial optimization problems with provable worst-case approximation ratios. Then He et al. \cite{he2009general} extended the techniques for any degree polynomials. Furthermore, the polynomial optimization problem with different constraints was also widely studied, to name a few, \cite{he2010approximation,he2013approximation,he2014probability,he2015inhomogeneous,yang2019efficiently}. When it comes to the complex domain, So et al. \cite{so2007approximating} presented a unified analysis for a class of discrete and continuous quadratic optimization problems in the complex Hermitian form. Later, Huang and Zhang \cite{huang2010approximation} developed an approximation algorithm for quadratic bilinear form complex optimization models under the unity constraint. For general degree polynomial optimization problems, Jiang et al. \cite{jiang2014approximation} proposed several approximation algorithms with worst-case approximation ratio guarantees.
% they \cite{jiang2016characterizing} investigated multivariate polynomial functions in complex variables and their corresponding symmetric tensor representations, identifying the condition that such complex polynomials always take real values.

Going beyond real and complex domains, a natural curiosity arises about generalizing these methodologies into the quaternion domain, and we aim to take \textit{the first step} along this direction. Formally, the problem of our interest is the following polynomial optimization model over the commutative quaternion domain
\begin{equation}
    \begin{aligned}
        (P) \quad & \max \ \re \boldsymbol{H}(\bx) \\
         &\operatorname{s.t.} \quad \bx \in \mathbf{S}^n. \notag
    \end{aligned}
\end{equation}
Here, $\bH(\bx)$ is the homogeneous polynomial defined in \eqref{eq.homogeneous polynominal}, and $\mathbf{S}^n$ is the quaternion spherical constraint: $\mathbf{S}^n=\{\bx\in\mH^n: \Vert \bx\Vert=1\}$. Similar to its counterparts in real complex cases, the optimization problem $(P)$ is NP-hard in general \cite{he2010approximation,jiang2014approximation}. Therefore, the development of a randomized approximation algorithm is a worthwhile study. To this end, we summarize the main contributions of this paper. We first extend the concept of multilinear forms and homogeneous polynomials to the commutative quaternion domain, alongside the corresponding quaternion tensors. Moreover, we demonstrate that the problems that this model can accommodate a fundamental problem in linear algebra: the best rank-one approximation of a commutative quaternion tensor. Using a novel quaternion probabilistic inequality, we introduce a randomized approximation algorithm for solving the homogeneous polynomial optimization problem $(P)$ through tensor relaxation and random sampling techniques. This algorithm yields an approximated solution within polynomial time. In contrast to widely adopted real structure-preserving methods \cite{li2016real,li2017real,jia2018new}, our approach is \textit{intrinsic}, as it performs operations directly on quaternions. 

The organization of the paper is as follows. In Section \ref{sec.preliminary}, we present the notations and definitions used throughout this work, as well as the example that motivates our model. Section \ref{section.inequality} focuses on proving a probability inequality, which is crucial for establishing the approximation ratio of our randomized algorithm (Algorithm \ref{Alg: Randomized Alg 1}) proposed later in Section \ref{section.algorithm}. A bridge between multilinear forms and homogeneous polynomials is demonstrated in Section \ref{section.homogenous}, and thus the approximation ratio of  Algorithm \ref{Alg: Randomized Alg 2} concerning problem $(P)$ can be obtained. Section \ref{sec.numerical} provides numerical experiments that validate our theoretical findings (Theorem \ref{tm.approximation theorem}), using a specially constructed problem equipped with an explicit upper bound.
\section{Preliminaries and the Motivating Example}\label{sec.preliminary}
In this section, we introduce some basic algebraic operations in the commutative quaternion domain for scalars, vectors, and matrices. For real vectors and matrices, we denote the 2-norm by $\|\cdot\|$, and the $\infty$-norm by $\|\cdot\|_\infty$.

\subsection{Commutative Quaternion Operations}
Throughout the paper, we denote elements in the commutative quaternion domain using bold fonts, such as $\bq$, $\bA$, and $\bF$, while elements in the real domain are represented in regular font (e.g. $q$, $A$, and $F$). For a commutative quaternion
\begin{equation*}
    \boldsymbol{q}=q_0+q_1\bi+q_2\bj+q_3\bk \in \mathbb{H},
\end{equation*}
the real and imaginary parts of $\bq$ are denoted as $\re (\bq) = q_0$ and $\im (\bq) = q_1\bi+q_2\bj+q_3\bk$, respectively. 
% Note that the multiplication rules of the commutative quaternions differ from the standard ones, that is, the module of $\bq $ does not equal to $\sqrt{\bq \cdot \overline{\bq}}$ if we still apply the same operation as $\overline{\bq} = \re (\bq)-\im (\bq)$.
We define the conjugate of a commutative quaternion $\boldsymbol{q}$ as follows:
    \begin{equation*}
        \overline{\boldsymbol{q}}=q_0-q_1\bi + q_2 \bj -q_3\bk,
    \end{equation*}
which is known as the first kind of principal conjugation for commutative quaternions \cite{kosal2014commutative}. This notion can be used to define the magnitude of the quaternion, which is given by:
\begin{equation*}
    |\bq|=\sqrt{\re(q\cdot \bar{q})}=\sqrt{q_0^2+q_1^2+q_2^2+q_3^2}.
\end{equation*}
With a slight abuse of notation, a commutative quaternion vector $\bq \in \mH^n$ is also written as
\begin{equation*}
    \boldsymbol{q}=q_0+q_1\bi+q_2\bj+q_3\bk, 
\end{equation*}
where $q_0, q_1, q_2, q_3 \in \mR^n$ are the components of $\bq$. For a commutative quaternion vector $\bq$, $\bq^{\top}$ denotes the transpose of $\bq$, and $\bq^{H}=(\overline{\bq})^{\top} = \overline{(\bq^{\top})} = q_0^\top-q_1^\top \bi+q_2^\top \bj-q_3^\top \bk.$ denotes its conjugate transpose.
The inner product of the two quaternion vectors $\bq, \bp \in \mH^n$ is defined as
$$
\bq \bullet \bp = \re(\bq^H \bp) = q_0^\top p_0+q_1^\top p_1+q_2^\top p_2+q_3^\top p_3.
$$
Note that this definition specifically takes the real part of the quaternion product $\bq^H \bp$ to ensure the inner product is real-valued. This property allows for the definition of the vector norm as follows:
$$
\|\bq\| = (\bq \bullet \bq)^{\frac{1}{2}} =\left(\sum_{i=0}^3 q_i^\top q_i\right)^{\frac{1}{2}}.
$$
We immediately obtain $\re(\boldsymbol{q}^\top \boldsymbol{p})\le \|\boldsymbol{p}\| \cdot \|\boldsymbol{q}\|$ for any $\boldsymbol{q}, \boldsymbol{p}\in \mH^n$. For any commutative quaternion matrix $\boldsymbol{A} \in \mH^{m\times n}$, it can be expressed as
\begin{equation*}
    \boldsymbol{A} = A_0+A_1\bi+A_2\bj+A_3\bk,
\end{equation*}
with $A_0, A_1, A_2, A_3 \in \mR^{m\times n}$. The transpose and the conjugate transpose of $\boldsymbol{A}$ are $\boldsymbol{A}^{\top}$ and $\boldsymbol{A}^H=(\overline{\boldsymbol{A}})^{\top} = \overline{(\boldsymbol{A}^{\top})}=A_0^\top - A_1^\top \bi + A_2^\top \bj - A_3^\top \bk$, respectively. Following a similar manner to vectors, the inner product of two quaternion matrices is defined as
$$
\boldsymbol{A} \bullet \boldsymbol{B}=\operatorname{Tr}(\re(\boldsymbol{A}^H \boldsymbol{B}))=\operatorname{Tr}\left(A_0^{\top} B_0+A_1^{\top} B_1+A_2^{\top} B_2+A_3^{\top} B_3\right),
$$
where ``Tr" is the trace of a matrix. Hence, the norm of $\boldsymbol{A}$ is defined by
$$
\|\boldsymbol{A}\|=\sqrt{\boldsymbol{A} \bullet \boldsymbol{A}}=\sqrt{\operatorname{Tr}\left(A_0^{\top} A_0+A_1^{\top} A_1+A_2^{\top} A_2+A_3^{\top} A_3\right)}.
$$

Similarly, for any commutative quaternion tensor $\boldsymbol{\mathcal{T}} \in \mH^{n_1\times \cdots\times n_d}$, it can be expressed as
\begin{equation*}
    \boldsymbol{\mathcal{T}} = \mathcal{T}_0+\mathcal{T}_1\bi+\mathcal{T}_2\bj+\mathcal{T}_3\bk,
\end{equation*}
with $\mathcal{T}_0, \mathcal{T}_1, \mathcal{T}_2, \mathcal{T}_3 \in \mR^{n_1\times \cdots\times n_d}$. Motivated by inner product of real tensors \cite{kolda2009tensor}, we define the inner product of two quaternion tensors as
$$
\boldsymbol{\mathcal{T}} \bullet \boldsymbol{\mathcal{K}}=\langle \mathcal{T}_0,\mathcal{K}_0\rangle+\langle \mathcal{T}_1,\mathcal{K}_1\rangle+\langle \mathcal{T}_2,\mathcal{K}_2\rangle+\langle \mathcal{T}_3,\mathcal{K}_3\rangle.
$$
Hence, the norm of $\boldsymbol{\mathcal{T}}$ is defined by
$$
\|\boldsymbol{\mathcal{T}}\|=\sqrt{\boldsymbol{\mathcal{T}} \bullet \boldsymbol{\mathcal{T}}}=\sqrt{\|\mathcal{T}_0\|^2+\|\mathcal{T}_1\|^2+\|\mathcal{T}_2\|^2+\|\mathcal{T}_3\|^2},
$$
where $\|\cdot\|$ is the norm of real tensors \cite{kolda2009tensor}. We also call the tensor $\boldsymbol{\mathcal{T}} \in \mH^{n_1\times \cdots\times n_d}$ who has $d$ demensions as $d$th-order commutative quaternion tensor. Specifically, a commutative quaternion tensor $\boldsymbol{\mathcal{T}}\in \mathbb{H}^{n^d}$ is super-symmetric if its entries $\boldsymbol{\mathcal{T}}_{i_1 i_2 \cdots i_d}$ are invariant under permutations of the indices ${i_1,i_2,\cdots,i_d}$, that is,
    $$
    \boldsymbol{\mathcal{T}}_{i_1 i_2 \cdots i_d}=\frac{a_{i_1i_2\cdots i_d}}{\vert \Pi(i_1i_2\cdots i_d) \vert},\forall 1\le i_1\le i_2\le\cdots\le i_d\le n,
    $$
    where $\Pi(i_1i_2\cdots i_d)$ is the set of all permutations of the indices $\{i_1,i_2,\cdots,i_d\}$, $a_{i_1i_2\cdots i_d}\in \mathbb{H}$ is a constant commutative quaternion and $|\cdot|$ denotes cardinality of the set.

\subsection{Homogeneous Polynomial in Quaternion Domain}
A multivariate polynomial $f(\bx)$ over commutative quaternion domain is a function of variable $\bx \in \mH^n$ whose coefficients are commutative quaternion, e.g. $f(\bx_1, \bx_2) = (\bi + \bj)\bx_1 + (1 - \bk)\bx_2$. Specifically, we focus on a general $n$-dimensional $d$-th degree homogeneous polynomial function $\boldsymbol{H}(\bx)$, which can be explicitly expressed as a sum of commutative quaternion monomials:
\begin{equation}\label{eq.homogeneous polynominal}
\boldsymbol{H}(\bx)=\sum_{1\le i_1\le i_2\le\cdots\le i_d\le n}\ba_{i_1i_2\cdots i_d}\bx_{i_1}\bx_{i_2}\cdots \bx_{i_d},~ \text{where}~\ba_{i_1i_2\cdots i_d}  \in\mH.
\end{equation}
% Closely related to homogeneous polynomials is the commutative quaternion tensor and its multilinear form, which are defined as follows.
% \begin{definition}\label{def.tensor}
%     Denote $\boldsymbol{{\mathcal{F}}}$ as a $d$th-order commutative quaternion tensor if its entries are all commutative quaternion numbers, i.e.,
%     \begin{align*}
%     \boldsymbol{{\mathcal{F}}} =\left(\boldsymbol{\mathcal{F}}_{n_{1}, n_{2}, \ldots, n_{d}}\right) =\mathcal{F}_{0}+\mathcal{F}_{1} \boldsymbol{i}+\mathcal{F}_{2} \boldsymbol{j}+\mathcal{F}_{3} \boldsymbol{k} \in \mathbb{H}^{n_{1} \times n_{2} \times \ldots \times n_{d}} ,
%     \end{align*}
%     where $\mathcal{F}_{i} \in \mathbb{R}^{n_{1} \times n_{2} \times \ldots \times n_{d}}~( i = 0, 1, 2, 3)$ are real tensors. Specifically, a commutative quaternion tensor $\boldsymbol{\mathcal{F}}\in \mathbb{H}^{n^d}$ is super-symmetric if its entries $\boldsymbol{\mathcal{F}}_{i_1 i_2 \cdots i_d}$ are invariant under permutations of the indices ${i_1,i_2,\cdots,i_d}$, that is,
%     $$
%     \boldsymbol{\mathcal{F}}_{i_1 i_2 \cdots i_d}=\frac{a_{i_1i_2\cdots i_d}}{\vert \Pi(i_1i_2\cdots i_d) \vert},\forall 1\le i_1\le i_2\le\cdots\le i_d\le n,
%     $$
%     where $\Pi(i_1i_2\cdots i_d)$ is the set of all permutations of the indices $\{i_1,i_2,\cdots,i_d\}$, $a_{i_1i_2\cdots i_d}\in \mathbb{H}$ is a constant commutative quaternion and $|\cdot|$ denotes cardinality of the set. 
% \end{definition}
Motivated by the CP rank \cite{kolda2009tensor} in the real domain, we introduce the concept of the commutative quaternion tensor rank as follows. 
\begin{definition}[the rank of commutative quaternion tensor]
    The rank of a commutative quaternion tensor $\boldsymbol{\mathcal{F}} \in \mH^{n_1\times\cdots\times n_d}$ is the smallest optimal solution of the following optimization problem:
    % $$
    % \rank(\boldsymbol{\mathcal{F}})=\argmin_r\left\{\min_{\bx_i^j\in \mH^{n_j},i\in [r],j\in [d]}\Vert \boldsymbol{\mathcal{F}}-\sum_{i=1}^r\boldsymbol{x}^1_i\otimes \cdots \otimes \boldsymbol{x}^d_i \Vert\right\},
    % $$
    \begin{equation*}
        \rank(\boldsymbol{\mathcal{F}})=\min \left\{r \in \mathbb{R} : \boldsymbol{\mathcal{F}} = \sum_{i=1}^r\boldsymbol{x}^1_i\otimes \cdots \otimes \boldsymbol{x}^d_i \right\},
    \end{equation*}
    where $\bx_i^j\in \mH^{n_{j}}$, $i=1,2\cdots,r$, $j=1,2,\cdots,d$, and $\otimes$ represents the outer product, $\boldsymbol{x}_i^1\otimes \cdots \otimes \boldsymbol{x}_i^d = \mathcal{T}\in \mH^{n_1\times\cdots\times n_d}$, $(\mathcal{T})_{k_1,\cdots,k_d} = (x_i^1)_{k_1}\cdots(x_i^d)_{k_d}$.
    
\end{definition}
To illustrate the aforementioned definition, let us consider an example:

when $\bx^1_{i_1i_2\cdots i_d} = (0, \ldots, \underbrace{\boldsymbol{\mathcal{F}}_{i_1i_2\cdots i_d}}_{i_1-th}, \ldots, 0)^\top \in \mH^{n_1}$, $\bx^j_{i_1i_2\cdots i_d} = (0, \ldots, \underbrace{1}_{i_j-th}, \ldots, 0)^\top\in \mH^{n_j}$, $ j = 2, \cdots, d$ and $i_k=1,2,\cdots,n_k$, $k =1, 2, \cdots, d$, it means that $\sum_{i_k\in \{1,2,\cdots,n_k\}},k =1, 2, \cdots,d\boldsymbol{x}^1_{i_1i_2\cdots i_d}\otimes \cdots \otimes \boldsymbol{x}^d_{i_1i_2\cdots i_d}$ becomes a feasible solution to the above minimization problem, indicating the existence of an upper bound $n_1\times n_2\times\cdots\times n_d$. Hence, the definition is well-defined. Correspondingly, 
a tensor $\boldsymbol{\mathcal{F}}\in \mH^{n_1\times\cdots\times n_d}$ is said to be rank-one if there exists $\boldsymbol{x}^1,\boldsymbol{x}^2,\cdots,\boldsymbol{x}^d$ such that
    $$
    \boldsymbol{\mathcal{F}}=\boldsymbol{x}^1\otimes \boldsymbol{x}^2\otimes  \cdots \otimes \boldsymbol{x}^d.
    $$

Given a $d$th-order commutative quaternion tensor $\boldsymbol{\mathcal{F}} \in \mathbb{H}^{n_{1}\times \ldots \times n_{d}}$, the associated multilinear form is defined as
\begin{equation}
\boldsymbol{F}(\boldsymbol{x}^1, \cdots, \boldsymbol{x}^d) = \sum_{i_1=1}^{n_1} \ldots \sum_{i_d=1}^{n_d} \boldsymbol{\mathcal{F}}_{i_1 \ldots i_d} \boldsymbol{x}_{i_1}^1 \ldots \boldsymbol{x}_{i_d}^d,\nonumber
\end{equation}
where $\boldsymbol{x}^k \in \mathbb{H}^{n_k}$ for $k=1, \ldots, d$. Besides, we use the notation 
$$F(\boldsymbol{x}^1,\cdots,\boldsymbol{x}^{d-t},\bullet,\boldsymbol{x}^{d-t+2},\cdots,\boldsymbol{x}^d)\in \mH^{n_{d-t+1}}$$
to denote a commutative quaternion vector which satisfies
$$
    \big(F(\boldsymbol{x}^1,\cdots,\boldsymbol{x}^{d-t},\bullet,\boldsymbol{x}^{d-t+2},\cdots,\boldsymbol{x}^d)\big)^\top \boldsymbol{y} = F(\boldsymbol{x}^1,\cdots,\boldsymbol{x}^{d-t},\boldsymbol{y},\boldsymbol{x}^{d-t+2},\cdots,\boldsymbol{x}^d)
$$
for any $\boldsymbol{y}\in \mH^{n_{d-t+1}}$. 

For the super-symmetric tensor, the homogeneous polynomial $\bH$ is derived from the multilinear form by setting $\boldsymbol{x}^1=\boldsymbol{x}^2=\cdots=\boldsymbol{x}^d$. 
%Such a relation serves as a linkage between homogeneous polynomials and multilinear forms, enabling us to design an approximation algorithm solving problem $(P)$ via tensor relaxation. 
%With the connection between homogeneous polynomials and multilinear forms, an approximation algorithm solving problem $(P)$ via tensor relaxation can be designed. Therefore, the multilinear form optimization under a spherical constraint is also our interest, that is,
The connection between homogeneous polynomials and multilinear forms motivates us to develop an approximation algorithm for solving problem $(P)$ through tensor relaxation. Therefore, the following spherical constrained multilinear form optimization $(F)$
\begin{equation}
    \begin{aligned}
        (F) \quad & \max \ \re \boldsymbol{F}\left(\boldsymbol{x}^{1}, \boldsymbol{x}^{2}, \ldots, \boldsymbol{x}^{d}\right) \\
         &\operatorname{s.t.} \quad \boldsymbol{x}^{k} \in \mathbf{S}^{n_{k}}, k=1,2, \ldots, d, \notag
    \end{aligned}
\end{equation}
serves as a linkage bridge. 
We first develop a randomized approximation algorithm for problem $(F)$, then utilize it as a subroutine to address the original problem $(P)$.
Note that setting $d=2$ in problem $(F)$ allows it to be reformulated as 
    \begin{equation}\label{eq.d=2}
        \begin{split}
            & \max \ \re\left(\boldsymbol{x}^\top \boldsymbol{A} \boldsymbol{y}\right), \\
         &\operatorname{s.t.} \quad \|\boldsymbol{x}\|^{2} = \|\boldsymbol{y}\|^2 = 1, \boldsymbol{x} \in \mathbb{H}^m, \boldsymbol{y} \in \mathbb{H}^n.
        \end{split}
    \end{equation}
    where $\boldsymbol{A}=\boldsymbol{\mathcal{F}}\in \mathbb{H}^{n_1\times n_2}$ is a commutative quaternion matrix. Recall that $\boldsymbol{A} = A_0 + A_1 \boldsymbol{i} + A_2 \boldsymbol{j} + A_3 \boldsymbol{k}$, and the same decomposition holds for $\boldsymbol{x}$ and $\boldsymbol{y}$. Therefore, problem \eqref{eq.d=2} is equivalent to computing the spectrum norms of the following large-dimension matrix in the real domain, 
    $$
    \begin{array}{ll}
    \underset{x_i, y_i}{\max} & \left(x_0^\top, x_1^\top, x_2^\top, x_3^\top \right)\left(\begin{array}{rrrr}
    A_0 & -A_1 & A_2 &  -A_3 \\
    -A_1 & -A_0 & -A_3 & -A_2 \\
    A_2 & -A_3 & A_0 & -A_1 \\
    -A_3 & -A_2 & -A_1 & -A_0
    \end{array}\right)\left(\begin{array}{l} y_0 \\ y_1 \\ y_2 \\ y_3 \end{array}\right) \\
    \text { s.t. } &\left\|\left(\begin{array}{l} x_0 \\ x_1 \\ x_2 \\ x_3 \end{array}\right)\right\| = \left\|\left(\begin{array}{l} y_0 \\ y_1 \\ y_2 \\ y_3 \end{array}\right)\right\| = 1, \\
    & x_i \in \mathbb{R}^m, \ y_i \in \mathbb{R}^n, \ i = 1, 2, 3, 4,
    \end{array}
    $$
    which can be solved in polynomial time. This yields the following lemma.
    \begin{lemma}\label{le.d=2}
        The problem $(F)$ with $d=2$ can be solved in polynomial time.
    \end{lemma}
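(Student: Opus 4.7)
The plan is to reduce the $d=2$ instance of problem $(F)$ to the computation of the spectral norm of an explicit real matrix of size $4m \times 4n$, and then appeal to the well-known fact that the spectral norm (equivalently, the largest singular value) of a real matrix can be computed in polynomial time via SVD.

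First I would fix the notation by writing $\boldsymbol{A} = A_0 + A_1 \bi + A_2 \bj + A_3 \bk$ with $A_\ell \in \mR^{m \times n}$, and similarly decompose $\boldsymbol{x} \in \mH^m$ and $\boldsymbol{y} \in \mH^n$. I would then expand the product $\boldsymbol{x}^\top \boldsymbol{A} \boldsymbol{y}$ using the commutative multiplication table $\bi^2 = \bk^2 = -1$, $\bj^2 = 1$, $\bi\bj = \bk$, $\bj\bk = \bi$, $\bi\bk = -\bj$, and collect the coefficient of $1$ to extract $\re(\boldsymbol{x}^\top \boldsymbol{A} \boldsymbol{y})$. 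Routine bookkeeping in this expansion (the unique step requiring care, though it is mechanical) produces exactly the bilinear form
\[
(x_0^\top, x_1^\top, x_2^\top, x_3^\top) M (y_0^\top, y_1^\top, y_2^\top, y_3^\top)^\top,
\]
where $M$ is the $4m \times 4n$ block matrix displayed in the excerpt just before the lemma. Simultaneously, the quaternion norm constraints unpack via $\|\boldsymbol{x}\|^2 = \sum_{\ell=0}^3 \|x_\ell\|^2$ into the Euclidean unit-norm constraints on the stacked real vectors $(x_0^\top, x_1^\top, x_2^\top, x_3^\top)^\top \in \mR^{4m}$ and analogously for $\boldsymbol{y}$.

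At this point the problem becomes
\[
\max\bigl\{ u^\top M v : u \in \mR^{4m},\ v \in \mR^{4n},\ \|u\| = \|v\| = 1 \bigr\},
\]
whose optimal value is the largest singular value $\sigma_1(M)$. Since $M$ is a real matrix of dimensions polynomial in $m$ and $n$, one can compute $\sigma_1(M)$ together with a pair of optimal singular vectors $(u^\star, v^\star)$ in polynomial time by any standard SVD algorithm. Finally, I would recover an optimal $(\boldsymbol{x}^\star, \boldsymbol{y}^\star)$ for $(F)$ by reading off the four real blocks of $u^\star$ and $v^\star$ as the real components $x_\ell^\star, y_\ell^\star$ of the quaternion vectors, which is clearly a polynomial-time post-processing step.

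The main obstacle is really just the expansion in the first step: one must be careful with the commutative quaternion sign pattern (in particular the minus signs arising from $\bi^2 = -1$, $\bk^2 = -1$, and $\bi\bk = \bk\bi = -\bj$) to verify that the block structure of $M$ indeed matches the one displayed in the excerpt. Once this identification is established, the rest of the argument is purely the standard reduction of bilinear maximization over the Euclidean sphere to SVD, which immediately delivers the polynomial-time conclusion asserted in Lemma~\ref{le.d=2}.
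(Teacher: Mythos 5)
Your proposal is correct and follows essentially the same route as the paper: the text preceding the lemma performs exactly this real-component expansion of $\re(\boldsymbol{x}^\top \boldsymbol{A}\boldsymbol{y})$ into a bilinear form with a $4m\times 4n$ real block matrix under stacked Euclidean unit-sphere constraints, whose optimum is the largest singular value and hence computable in polynomial time. Your sign bookkeeping matches the displayed block matrix, so there is nothing to add.
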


We close this subsection by recalling the concept of approximation ratio, which measures the effectiveness of the proposed approximation algorithm.
\begin{definition}
    For any maximization problem $(P)$: $\max_{x\in X}p(x)$ with optimal value $v^*(P)$. A constant $\tau\in(0,1]$ is called the approximation ratio of a polynomial-time approximation algorithm for the problem $(P)$ if the algorithm returns a feasible solution $\hat{x}\in X$ satisfying $p(\hat{x})\ge\tau \cdot v^*(P)$.
\end{definition}

\subsection{A Motivating Example: the Best Rank-One Tensor Approximation}
%For color image processing, quaternion tensors have gained popularity recently 
Quaternion tensors have recently gained popularity in color image processing due to their effective representation ability \cite{miao2020low,chen2020low,chen2019low}. 
% Typically, a color image or video can be represented as an $m \times n \times 3$ real tensor $\mathcal{X}$ or an $m \times n \times 3 \times t$ real tensor $\mathcal{Y}$. Here $m$ and $n$ represent image dimensions, $3$ represents the RGB channels, and $t$ denotes time. By introducing quaternions to represent RGB pixel values, i.e.,
% $$
% \bq = 0 + q_r \bi + q_g \bi + q_b \bi,
% $$
% where $q_r$, $q_g$, and $q_b$ represent red, green, and blue pixel values, respectively. Then the color image or video data can be modeled as an $m \times n$ quaternion tensor or an $m \times n \times t$ quaternion tensor, and denoted as $\boldsymbol{\mathcal{F}}$. Consequently, 
One widely used application is the low-rank quaternion approximation model for color images, formulated as:
\begin{equation}\label{eq.low-rank model}
    \min_{\boldsymbol{\mathcal{X}}\in \mH^{m\times n\times t}}\left\{\frac{1}{2}\Vert (\boldsymbol{\mathcal{X}}-\boldsymbol{\mathcal{F}})_{\Omega}\Vert^2: \rank(\boldsymbol{\mathcal{X}})=r\right\},
\end{equation}
where $r \le \min\{m,n,t\}$, and $\Omega$ is the set of observed entries of an $m \times n \times t$ commutative quaternion tensor $\boldsymbol{\mathcal{F}}$. Our focus is on the special case where $r=1$ and $\Omega$ is full observation in model \eqref{eq.low-rank model}. In this scenario, the problem reduces to \textit{the best rank-one approximation} \cite{qi2011best,jiang2015uniqueness,friedland2013best,yang2016rank,yang2016convergence} of a commutative quaternion tensor:
\begin{equation}\label{best rank-one approximation}
    \min_{\boldsymbol{x}^k \in \mH^{n_k},k=1,\cdots,d}\frac{1}{2}\Vert \boldsymbol{x}^1\otimes \cdots \otimes \boldsymbol{x}^d-\boldsymbol{\mathcal{F}}\Vert^2,
\end{equation}
where $\boldsymbol{\mathcal{F}}\in \mH^{n_1\times\cdots\times n_d}$ is a nonzero commutative quaternion tensor. This problem \eqref{best rank-one approximation} can be equivalently modeled by our problem $(P)$ and $(F)$ after some reformulation. For compactness, we defer proofs in the Appendix A. 
\begin{proposition}[Equivalence with $(F)$]\label{prop.equi with F}
     The optimization problem (\ref{best rank-one approximation}) is equivalent to the following problem:
    \begin{equation}\label{tight1}
        \max_{\Vert \bx^k\Vert=1,k=1,\cdots,d} \re\boldsymbol{F}(\bx^1,\cdots,\bx^d). 
    \end{equation}
\end{proposition}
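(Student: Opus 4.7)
The plan is to reduce \eqref{best rank-one approximation} to \eqref{tight1} by the classical ``expand and rescale'' technique for best rank-one approximations, adapted to the commutative quaternion inner product $\bullet$ of Section~\ref{sec.preliminary}. First I would expand
\[
\tfrac12\|\boldsymbol{x}^1\otimes\cdots\otimes\boldsymbol{x}^d-\boldsymbol{\mathcal{F}}\|^2 = \tfrac12\|\boldsymbol{x}^1\otimes\cdots\otimes\boldsymbol{x}^d\|^2 - (\boldsymbol{x}^1\otimes\cdots\otimes\boldsymbol{x}^d)\bullet\boldsymbol{\mathcal{F}} + \tfrac12\|\boldsymbol{\mathcal{F}}\|^2,
\]
so that the constant $\tfrac12\|\boldsymbol{\mathcal{F}}\|^2$ drops out and the variable-dependent part is the squared rank-one norm minus a real-valued cross pairing with $\boldsymbol{\mathcal{F}}$.

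Next I would parametrize $\boldsymbol{x}^k = t_k\boldsymbol{y}^k$ with $t_k\ge 0$ and $\|\boldsymbol{y}^k\|=1$, and collect all scales into a single positive real parameter $t := t_1 t_2\cdots t_d$. The variable-dependent objective then becomes a scalar quadratic in $t$ whose two coefficients depend only on the directions $(\boldsymbol{y}^1,\ldots,\boldsymbol{y}^d)$; closed-form minimization in $t\ge 0$ eliminates the scale. Using the scalar identity $\boldsymbol{q}\bullet\boldsymbol{p}=\re(\overline{\boldsymbol{q}}\boldsymbol{p})$ together with the commutative-quaternion property $\overline{\boldsymbol{a}\boldsymbol{b}}=\overline{\boldsymbol{a}}\,\overline{\boldsymbol{b}}$ (which I would verify directly from the multiplication rules in Section~\ref{sec.preliminary}), the cross-pairing coefficient of $t$ can be rewritten as $\re\,\boldsymbol{F}(\overline{\boldsymbol{y}^1},\ldots,\overline{\boldsymbol{y}^d})$. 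Since quaternion conjugation preserves the unit-norm constraint, the change of variables $\boldsymbol{y}^k\leftarrow\overline{\boldsymbol{y}^k}$ shows that maximizing this cross pairing over the product of unit spheres is identical to \eqref{tight1}, and the reverse scaling then converts any maximizer of \eqref{tight1} back into a minimizer of \eqref{best rank-one approximation}.

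The main obstacle is that in the commutative quaternion setting the magnitude is not multiplicative, that is, $|\boldsymbol{q}\boldsymbol{p}|\neq|\boldsymbol{q}|\,|\boldsymbol{p}|$ in general, so $\|\boldsymbol{y}^1\otimes\cdots\otimes\boldsymbol{y}^d\|$ need not equal $1$ even when every factor has unit norm. The coefficient of $t^2$ therefore does not collapse to a constant, and I would have to keep this direction-dependent factor explicit inside the closed-form minimization in $t$. The technical heart of the proof is to verify that this residual scaling does not alter the location of the optimum of $\re\,\boldsymbol{F}$ over the unit spheres; once that cancellation is established, the equivalence with \eqref{tight1} follows from the sequence of reductions above.
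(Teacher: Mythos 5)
Your overall route is the same as the paper's: write the rank-one term as unit-norm directions times a nonnegative real scale, minimize the resulting scalar quadratic in the scale in closed form, and identify the linear coefficient with the real part of the multilinear form. Your handling of the cross term is in fact more scrupulous than the paper's: using $\boldsymbol{q}\bullet\boldsymbol{p}=\re(\overline{\boldsymbol{q}}\,\boldsymbol{p})$ and the multiplicativity of the first-kind conjugation, the linear coefficient is $\re\boldsymbol{F}(\overline{\boldsymbol{y}^1},\ldots,\overline{\boldsymbol{y}^d})$ rather than $\re\boldsymbol{F}(\boldsymbol{y}^1,\ldots,\boldsymbol{y}^d)$, and you correctly observe that conjugation is a norm-preserving bijection of each sphere, so this discrepancy is harmless at the level of optimal values.

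The genuine gap is the step you yourself flag and then leave open. The coefficient of $t^2$ is $A(\boldsymbol{y}^1,\ldots,\boldsymbol{y}^d)=\Vert\boldsymbol{y}^1\otimes\cdots\otimes\boldsymbol{y}^d\Vert^2$, which over the commutative quaternions is not constant on the product of unit spheres: because of zero divisors it can even vanish (e.g. $(1+\bj)(1-\bj)=0$) and it can exceed $1$. After minimizing over $t\ge 0$, what your argument actually establishes is that \eqref{best rank-one approximation} is equivalent to maximizing $\max\{\re\boldsymbol{F}(\overline{\boldsymbol{y}^1},\ldots,\overline{\boldsymbol{y}^d}),0\}^2/A(\boldsymbol{y}^1,\ldots,\boldsymbol{y}^d)$ over the unit spheres, which is a different problem from \eqref{tight1}; the claim that ``the residual scaling does not alter the location of the optimum'' is precisely the content of the proposition, and you give no argument for it. It is not an innocuous cancellation: a direction with a smaller value of $\re\boldsymbol{F}$ but a much smaller $A$ could a priori yield a larger ratio. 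Nor can you close the gap by pointing to the paper: its proof expands the square as $\Vert\boldsymbol{\mathcal{F}}\Vert^2-2\lambda\,\re\boldsymbol{F}+\lambda^2$, i.e. it takes the quadratic coefficient to be one (and omits the conjugation), so it never engages with the non-multiplicativity you identify. To complete your proof you would need either to show that maximizers of $\re\boldsymbol{F}$ over the spheres also maximize the ratio, or to justify an identity forcing $A=1$ for the relevant rank-one tensors; as written, the proposal stops one unproven claim short of the statement.
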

\begin{proposition}[Equivalence with $(P)$]\label{prop.equi with P}
    The optimization problem (\ref{best rank-one approximation}) is equivalent to the following problem:
    \begin{equation}\label{eq.equi with P}
        \max_{\Vert \bx\Vert^2=1,\bx\in\boldsymbol{H}^{\sum_{i=1}^{d}n^k}}\re\boldsymbol{H}(\bx).
    \end{equation}
 \end{proposition}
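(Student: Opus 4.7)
The plan is to reduce to Proposition \ref{prop.equi with F} and then encode the multilinear maximization as a single homogeneous polynomial maximization on a larger sphere via the standard symmetrization trick, adapted to the commutative quaternion setting.

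First, I would apply Proposition \ref{prop.equi with F} to recast the best rank-one approximation \eqref{best rank-one approximation} as $\max_{\|\bx^k\|=1,\, k=1,\ldots,d}\re\boldsymbol{F}(\bx^1,\ldots,\bx^d)$, so it suffices to show that this multilinear problem is equivalent (up to a known rescaling of the optimizer and a positive constant factor in the value) to a problem of the form $\max_{\|\bx\|=1}\re\boldsymbol{H}(\bx)$ over a single sphere.

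Next, I would lift to $\mH^N$ with $N=\sum_{k=1}^{d}n_k$, identifying each $\bx\in\mH^N$ with a block decomposition $\bx=((\bx^1)^\top,\ldots,(\bx^d)^\top)^\top$, $\bx^k\in\mH^{n_k}$. I construct a super-symmetric tensor $\tilde{\boldsymbol{\mathcal{F}}}\in\mH^{N\times\cdots\times N}$ ($d$ copies) whose only nonzero entries lie at cross-block positions $((m_1,i_1),\ldots,(m_d,i_d))$ with $(m_1,\ldots,m_d)$ a permutation of $(1,\ldots,d)$; at such a position I place $\boldsymbol{\mathcal{F}}_{i_{\sigma(1)}\ldots i_{\sigma(d)}}/d!$, where $\sigma$ is the permutation that reorders the block labels to $(1,\ldots,d)$. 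Defining $\boldsymbol{H}(\bx):=\tilde{\boldsymbol{F}}(\bx,\ldots,\bx)$ in the sense of \eqref{eq.homogeneous polynominal} and using commutativity of multiplication in $\mH$, the $d!$ permutation copies collapse (with the $1/d!$ weight) into $\boldsymbol{H}(\bx)=\boldsymbol{F}(\bx^1,\ldots,\bx^d)$. A quick $d=2$ sanity check recovers the familiar block-off-diagonal embedding $\tilde{\boldsymbol{\mathcal{F}}}=\tfrac{1}{2}\begin{pmatrix}0 & \boldsymbol{\mathcal{F}}\\ \boldsymbol{\mathcal{F}}^\top & 0\end{pmatrix}$.

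Third, I would relate the two optimal values by a scaling argument. For any $\bx\in\mathbf{S}^N$, set $r_k=\|\bx^k\|$ and $\tilde{\bx}^k=\bx^k/r_k$ whenever $r_k>0$; since the $r_k$ are real scalars, they pull out of the multilinear form freely, giving $\re\boldsymbol{H}(\bx)=\big(\prod_k r_k\big)\re\boldsymbol{F}(\tilde{\bx}^1,\ldots,\tilde{\bx}^d)$. AM-GM under $\sum_k r_k^2=1$ yields $\prod_k r_k\le d^{-d/2}$, with equality at $r_k=1/\sqrt{d}$. Conversely, any feasible tuple $(\bx^1,\ldots,\bx^d)$ with $\|\bx^k\|=1$ lifts to $\bx=(\bx^1/\sqrt{d};\ldots;\bx^d/\sqrt{d})\in\mathbf{S}^N$ with $\re\boldsymbol{H}(\bx)=d^{-d/2}\re\boldsymbol{F}(\bx^1,\ldots,\bx^d)$. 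Thus the two optimal values differ only by the positive factor $d^{-d/2}$, and optimizers transfer by rescaling each block by $\sqrt{d}$; combining with Proposition \ref{prop.equi with F} gives the desired equivalence with the best rank-one approximation.

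The main obstacle is the symmetrization bookkeeping in the second step: one must carefully verify, leveraging commutativity of $\mH$, that the $d!$ block-permutation contributions in $\tilde{\boldsymbol{F}}(\bx,\ldots,\bx)$ precisely combine (with the $1/d!$ weight) into a single copy of $\boldsymbol{F}(\bx^1,\ldots,\bx^d)$, and that the resulting tensor is genuinely super-symmetric so the representation matches \eqref{eq.homogeneous polynominal}. A minor secondary issue is to ensure that the optima in question are nonnegative so the scaling identity lifts to optimizers; this is handled by observing that flipping the sign of any single block $\bx^k$ flips the sign of $\re\boldsymbol{F}$, so the maximum of $\re\boldsymbol{F}$ is automatically nonnegative whenever $\boldsymbol{\mathcal{F}}\neq 0$.
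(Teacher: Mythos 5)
Your proof is correct and follows essentially the same route as the paper: stack the blocks into one variable, realize $\boldsymbol{F}(\bx^1,\ldots,\bx^d)$ as a degree-$d$ homogeneous polynomial $\boldsymbol{H}$ in the stacked variable, and use an AM--GM/scaling argument to relate the single-sphere constraint to the product-of-spheres constraint up to a positive constant factor. The differences are only presentational: the paper passes through the intermediate problem with constraint $\sum_{k}\Vert\bx^k\Vert^2=d$ and a $\sqrt{d}$ rescaling instead of your direct $d^{-d/2}$ factor, and it merely asserts the existence of $\boldsymbol{H}$ where you construct the symmetrized super-symmetric tensor (and the nonnegativity of the optimum) explicitly.
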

    Furthermore, combing Lemma \ref{le.d=2} with Proposition \ref{prop.equi with F} gives the following corollary.
    \begin{corollary}
        If $d = 2$ in problem~\eqref{best rank-one approximation}, the problem can be solved in polynomial time.
    \end{corollary}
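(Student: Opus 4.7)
The proof I propose is a direct chaining of the two results stated immediately before the corollary, so the plan is short. First, I would specialize Proposition~\ref{prop.equi with F} to $d=2$: it rewrites the best rank-one approximation problem
\[
\min_{\boldsymbol{x}^1\in\mH^{n_1},\,\boldsymbol{x}^2\in\mH^{n_2}}\tfrac{1}{2}\,\|\boldsymbol{x}^1\otimes\boldsymbol{x}^2-\boldsymbol{\mathcal{F}}\|^2
\]
as the spherically constrained bilinear maximization
\[
\max_{\|\boldsymbol{x}^1\|=\|\boldsymbol{x}^2\|=1}\re\boldsymbol{F}(\boldsymbol{x}^1,\boldsymbol{x}^2),
\]
which is exactly problem $(F)$ with $d=2$ and coefficient tensor $\boldsymbol{\mathcal{F}}$. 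Then I would invoke Lemma~\ref{le.d=2}, which reduces this problem to computing the spectral norm of the explicit $4(n_1+n_2)\times 4(n_1+n_2)$ real block matrix displayed just above the lemma, a task solvable in polynomial time by standard real singular value decomposition.

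The only point worth a line of care in the write-up is that the equivalence in Proposition~\ref{prop.equi with F} must be algorithmically constructive, not just an identity between optimal values: given an optimizer $(\boldsymbol{x}^{1,*},\boldsymbol{x}^{2,*})$ of the bilinear form problem, I should recover a genuine minimizer of \eqref{best rank-one approximation}. Once the two unit directions $\boldsymbol{x}^{1,*}$ and $\boldsymbol{x}^{2,*}$ are fixed, optimizing over a rank-one tensor of the form $\boldsymbol{\lambda}\cdot\boldsymbol{x}^{1,*}\otimes\boldsymbol{x}^{2,*}$ with a single commutative quaternion scalar $\boldsymbol{\lambda}\in\mH$ is a scalar linear least-squares problem, which in turn reduces to an ordinary real least-squares problem of size at most four and is therefore solvable in polynomial time. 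Combining this recovery step with the two reductions above gives a polynomial-time procedure for the whole problem.

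I do not anticipate any substantive obstacle: both Proposition~\ref{prop.equi with F} and Lemma~\ref{le.d=2} are already in hand, and the argument is purely compositional. The only thing to verify when writing the formal proof is that the reduction underlying Proposition~\ref{prop.equi with F} (deferred to Appendix~A) has polynomial-time overhead in $n_1$ and $n_2$, which is evident from its bilinear structure when $d=2$, so that polynomial-time solvability really transfers end-to-end from $(F)$ back to the original tensor approximation problem.
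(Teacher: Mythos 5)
Your proposal is correct and follows exactly the paper's route: the corollary is obtained by combining Proposition~\ref{prop.equi with F} (specialized to $d=2$) with Lemma~\ref{le.d=2}, which is precisely what the paper does. Your extra remark on constructively recovering the scalar $\boldsymbol{\lambda}$ is a harmless refinement already implicit in the proof of Proposition~\ref{prop.equi with F}, where the optimal scaling is given in closed form.
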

\section{Probability Inequality in Quaternion Domain}\label{section.inequality}
In this section, we establish a novel probability inequality in the commutative quaternion domain, which is crucial for developing the approximation algorithm to solve the problem $(F)$. We begin by presenting some probability theory over the commutative quaternion domain. For a comprehensive study of general quaternion probability theories, readers can refer to \cite{loots2010development, vakhania2010quaternion, liu2022randomized}.
%In this subsection, we review some information of quaternion normal distributions and chi-squared distributions, and you could see \cite{loots2010development,vakhania2010quaternion,liu2022randomized} for more details. 
% In this subsection, we propose the definitions of commutative quaternion normal distributions and chi-squared distributions. The definitions proposed are motivated by general quaternion probability theory. 

%We say a random $n \times 1$ quaternion vector $\boldsymbol{\xi}=\xi_0 + \xi_1\boldsymbol{i} + \xi_2\boldsymbol{j} + \xi_3\boldsymbol{k}$ follows the quaternion normal distribution $\mathcal{QN}(0, 4I_n)$ law, with the possibility density function: pdf$(\boldsymbol{\xi}) = (2\pi)^{-2n}\text{exp}\left(\Tr{-\frac{1}{2}\boldsymbol{\xi}^H\xi}\right)$ if the four parts $\xi_0, \xi_1, \xi_2, \xi_3 \overset{\text{i.i.d}}{\sim} \mathcal{N}(0, I_n)$. 
\begin{definition}\label{definition.distribution}
    A random $n \times 1$ commutative quaternion vector $\boldsymbol{\xi}=\xi_0 + \xi_1\boldsymbol{i} + \xi_2\boldsymbol{j} + \xi_3\boldsymbol{k}$ follows the quaternion normal distribution $\mathcal{QN}(0,4I_n)$ law, if it satisfies $\xi_0,\xi_1,\xi_2,\xi_3 \overset{\text{i.i.d}}{\sim} \mathcal{N}(0,I_n)$. 
\end{definition}

Notice that if $\boldsymbol{\xi} \sim \mathcal{QN}(0, 4I_n)$, then $\|\boldsymbol{\xi}\|^2$ represents the sum of squares of $4n$ independent real variables, each following the $\mathcal{N}(0, 1)$ distribution. Consequently, $\|\boldsymbol{\xi}\|^2$ follows real chi-squared distribution $\chi_{4n}^{2}$ with $4n$ degrees of freedom. Moreover, the concept of uniform distribution on the commutative quaternion sphere follows below, which is a direct extension of the real and complex domain.

\begin{definition}\label{definition.quaternion sphere uniform distribution}
A random quaternion vector $\boldsymbol{\xi}$ is a multivariate uniform distribution on the unit sphere $\mathbf{S}^{n}$, denoted by $\boldsymbol{\xi} \sim \mathbf{S}^{n}$, if $\Vec{\xi} = (\xi_0^\top,\xi_1^\top,\xi_2^\top,\xi_3^\top)^\top \in \mathbb{R}^{4n}$ is a uniform distribution on the real sphere $\mathbf{S}_{\mR}^{4n}$, i.e. $\Vec{\xi} \sim \mathbf{S}_{\mR}^{4n}$.
\end{definition}

The following technical lemma characterizes the property of multivariate uniform distribution on $\mathbf{S}^{n}$, which will be used in establishing the probability inequality.

\begin{lemma}\label{lemma.quaternion sphere uniform distribution property}
A random commutative quaternion vector $\boldsymbol{\xi} \sim \mathbf{S}^{n}$ is equivalent to $\boldsymbol{\eta}/\|\boldsymbol{\eta}\|$, with $\boldsymbol{\eta} \sim \mathcal{QN}(0, 4I_n)$.
\end{lemma}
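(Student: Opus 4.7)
The plan is to reduce the statement to the classical real-valued fact that a standard Gaussian in $\mathbb{R}^{4n}$, when normalized, is uniformly distributed on the unit sphere $\mathbf{S}_{\mathbb{R}}^{4n}$. The key observation is that both Definition \ref{definition.distribution} and Definition \ref{definition.quaternion sphere uniform distribution} are defined via the real vectorization $\Vec{\cdot}$ that stacks the four real components of a quaternion vector, so all the quaternion structure collapses to a standard problem about the $4n$-dimensional real sphere.

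First I would unpack the definitions. Writing $\boldsymbol{\eta} = \eta_0 + \eta_1 \boldsymbol{i} + \eta_2 \boldsymbol{j} + \eta_3 \boldsymbol{k}$ and $\Vec{\eta} = (\eta_0^\top,\eta_1^\top,\eta_2^\top,\eta_3^\top)^\top \in \mathbb{R}^{4n}$, Definition \ref{definition.distribution} gives $\Vec{\eta} \sim \mathcal{N}(0, I_{4n})$, since the four blocks are i.i.d.\ $\mathcal{N}(0,I_n)$. Moreover, the quaternion norm formula from Section \ref{sec.preliminary} yields
\[
\|\boldsymbol{\eta}\| = \sqrt{\|\eta_0\|^2 + \|\eta_1\|^2 + \|\eta_2\|^2 + \|\eta_3\|^2} = \|\Vec{\eta}\|_2,
\]
and since scalar-by-real division commutes with vectorization, $\overrightarrow{\boldsymbol{\eta}/\|\boldsymbol{\eta}\|} = \Vec{\eta}/\|\Vec{\eta}\|_2$.

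Next, I would invoke the standard real fact: if $\Vec{\eta} \sim \mathcal{N}(0,I_{4n})$, then $\Vec{\eta}/\|\Vec{\eta}\|_2 \sim \mathbf{S}_{\mathbb{R}}^{4n}$. This is immediate from the rotational invariance of the isotropic Gaussian law (for any orthogonal $Q \in \mathbb{R}^{4n \times 4n}$, $Q\Vec{\eta} \stackrel{d}{=} \Vec{\eta}$, hence $Q(\Vec{\eta}/\|\Vec{\eta}\|_2) \stackrel{d}{=} \Vec{\eta}/\|\Vec{\eta}\|_2$), combined with the fact that the only orthogonally invariant probability measure on the unit sphere is the uniform one. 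Putting the pieces together, $\overrightarrow{\boldsymbol{\eta}/\|\boldsymbol{\eta}\|} \sim \mathbf{S}_{\mathbb{R}}^{4n}$, which by Definition \ref{definition.quaternion sphere uniform distribution} is precisely the statement $\boldsymbol{\eta}/\|\boldsymbol{\eta}\| \sim \mathbf{S}^n$.

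There is no serious obstacle here; the lemma is essentially a bookkeeping translation between quaternion notation and the well-known real result. The only point that requires a little care is to verify that the vectorization map commutes with both the quaternion norm and scalar division by a real number, so that normalizing $\boldsymbol{\eta}$ as a quaternion vector agrees with normalizing $\Vec{\eta}$ as a real vector. Once that correspondence is made explicit, the conclusion follows from rotational invariance of the standard Gaussian in $\mathbb{R}^{4n}$.
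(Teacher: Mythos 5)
Your proposal is correct and follows essentially the same route as the paper: both reduce the claim, via the vectorization identities $\Vec{\eta}\sim\mathcal{N}(0,I_{4n})$ and $\|\boldsymbol{\eta}\|=\|\Vec{\eta}\|$, to the classical real fact that a normalized standard Gaussian in $\mathbb{R}^{4n}$ is uniform on $\mathbf{S}_{\mR}^{4n}$, which is exactly Definition \ref{definition.quaternion sphere uniform distribution}. The only difference is cosmetic: you additionally spell out the rotational-invariance justification of the real fact, which the paper simply cites as known.
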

\begin{proof}
According to Definition \ref{definition.quaternion sphere uniform distribution} and the property of uniform distribution on the sphere in the real domain, we know that $\Vec{\xi} \sim \mathbf{S}_{\mR}^{4n}$ is equivalent to $\Vec{\eta}/\|\Vec{\eta}\|$, where $\Vec{\eta} = (\eta_{ij})^\top \in \mathbb{R}^{4n}$ with $\eta_{ij}$'s are i.i.d. standard random variables for all $i = 0,1,2,3$ and $j = 1, \cdots, n$. Let $\eta_i = (\eta_{i1}, \cdots, \eta_{in})^\top$, $i = 0,1,2,3$, then it follows
$$\eta_0, \eta_1, \eta_2, \eta_3 \overset{\text{i.i.d}}{\sim} \mathcal{N}(0, I_n),$$
which further implies that $$\boldsymbol{\eta} = \eta_0 + \eta_1 \bi + \eta_2 \bj + \eta_3 \bk \sim \mathcal{QN}(0,4I_n).$$ 
Combining with the fact $\|\Vec{\eta}\| = \|\boldsymbol{\eta}\|$ completes the equivalence between $\boldsymbol{\xi}$ and $\boldsymbol{\eta}/\|\boldsymbol{\eta}\|$, with $\boldsymbol{\eta} \sim \mathcal{QN}(0, 4I_n)$.
\end{proof}

Now we are ready to prove the main result of this section.
% concerning the inner product between a random commutative quaternion vector and a fixed commutative quaternion vector. Our goal is to provide a lower bound on the event where the inner product is greater than or equal to a given threshold.

% This bound is non-trivial, but similar results have been demonstrated in the real domain \cite{he2014probability} and complex domain \cite{fu2018approximation}. We extend these findings to the more general commutative quaternion domain, which exhibits increased randomness, as demonstrated in Theorem \ref{thm.constant probability inequality} and Theorem \ref{prop.epsilon version}.
\begin{theorem}\label{thm.constant probability inequality}
If $\boldsymbol{\xi}$ and $\boldsymbol{a}$ are both uniform distributions on the commutative quaternion unit sphere $\mathbf{S}^{n}$, then for $\gamma>0$ with $\gamma \ln n<n$, there exists a constant $c(\gamma)>0$ such that
$$
\operatorname{Prob}\left\{\operatorname{Re}\left(\boldsymbol{a}^\top \boldsymbol{\xi}\right) \geq \sqrt{\frac{\gamma \ln n}{n}}\right\} \geq \frac{c(\gamma)}{n^{4.5 \gamma} \sqrt{\ln n}}.
$$
\end{theorem}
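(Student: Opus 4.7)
The plan is to reduce the quaternion probability bound to a one-dimensional anti-concentration estimate for a standard Gaussian conditioned on a chi-squared tail event, using the representation provided by Lemma \ref{lemma.quaternion sphere uniform distribution property}. First I would expand $\operatorname{Re}(\boldsymbol{a}^\top\boldsymbol{\xi})$ using the commutative quaternion multiplication rules. All mixed products $\bi\bj,\ \bj\bk,\ \bk\bi$ are purely imaginary, so only the diagonal squares survive, producing
$$
\operatorname{Re}(\boldsymbol{a}^\top\boldsymbol{\xi}) \;=\; a_0^\top\xi_0 - a_1^\top\xi_1 + a_2^\top\xi_2 - a_3^\top\xi_3 \;=\; \tilde a^\top \vec\xi,
$$
where $\tilde a = (a_0^\top,-a_1^\top,a_2^\top,-a_3^\top)^\top\in\mathbb{R}^{4n}$ is a unit vector (the sign flips being an isometry) and $\vec\xi\in\mathbb{R}^{4n}$ is uniform on $\mathbf{S}_{\mathbb{R}}^{4n}$ by Definition \ref{definition.quaternion sphere uniform distribution}. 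Rotational invariance of the real spherical uniform distribution (applied conditionally on $\boldsymbol{a}$) then gives $\tilde a^\top \vec\xi \stackrel{d}{=} \vec\xi_1$, and the real analogue of Lemma \ref{lemma.quaternion sphere uniform distribution property} (already used in its proof) writes $\vec\xi_1 \stackrel{d}{=} \eta_1/\|\eta\|$ with $\eta=(\eta_1,\tilde\eta)\sim \mathcal{N}(0,I_{4n})$.

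Next, setting $t=\sqrt{\gamma\ln n/n}$, so that $t<1$ under the hypothesis $\gamma\ln n<n$, the event $\eta_1/\|\eta\|\ge t$ becomes $\eta_1\ge \tfrac{t}{\sqrt{1-t^2}}\|\tilde\eta\|$. Introducing $A=\{\|\tilde\eta\|^2\le 9n-1\}$ and using independence of $\eta_1$ and $\tilde\eta$,
$$
\operatorname{Prob}\!\left\{\eta_1/\|\eta\|\ge t\right\}\;\ge\; \operatorname{Prob}(A)\cdot \bar\Phi\!\left(\tfrac{t\sqrt{9n-1}}{\sqrt{1-t^2}}\right).
$$
Markov applied to the $\chi^2$ variable $\|\tilde\eta\|^2$ (of mean $4n-1$) gives $\operatorname{Prob}(A)>5/9$ for every $n\ge 1$. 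Substituting $t^2=\gamma\ln n/n$, the argument of $\bar\Phi$ is, up to vanishing corrections, $3\sqrt{\gamma\ln n}$, and the Mills-ratio bound $\bar\Phi(x)\ge \tfrac{1}{\sqrt{2\pi}\,x}(1-x^{-2})\,e^{-x^2/2}$ then supplies $e^{-x^2/2} = n^{-9\gamma/2} = n^{-4.5\gamma}$ together with a prefactor of order $1/\sqrt{\ln n}$. Multiplying by $\operatorname{Prob}(A)$ yields the claimed $c(\gamma)/(n^{4.5\gamma}\sqrt{\ln n})$.

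The main obstacle will be clean constant bookkeeping: one must absorb the correction factor $(1-t^2)^{-1/2}$ (which tends to $1$ as $n\to\infty$ under $\gamma\ln n<n$) and any finitely many small-$n$ exceptions into the single positive constant $c(\gamma)$, and verify that $x>1$ so the Mills-ratio form is valid, both of which hold for $n\ge n_0(\gamma)$. The specific cutoff $9n-1$ is the key design choice: it is the minimal ``lazy'' threshold at which Markov alone gives $\operatorname{Prob}(A)$ bounded away from zero, and it converts the constant $9$ into the exponent $9\gamma/2=4.5\gamma$ through the Gaussian tail, matching the statement exactly. A sharper concentration argument (Chebyshev or Hanson--Wright) would improve the exponent toward $2\gamma$, but the present Markov-only version suffices for the downstream approximation-ratio analysis and keeps the proof elementary.
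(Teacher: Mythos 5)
Your proposal is correct and reaches the paper's bound by the same high-level strategy (reduce $\operatorname{Re}(\boldsymbol{a}^\top\boldsymbol{\xi})$ to the first coordinate of a uniform point on the real $4n$-sphere, i.e.\ a $\mathcal{N}(0,1)$ variable divided by a $\chi_{4n}$-type norm, then trade a Gaussian tail at level $\approx 3\sqrt{\gamma\ln n}$ against the event that the norm is $\lesssim 3\sqrt{n}$), but the technical ingredients differ from the paper's in three ways. First, where the paper simply invokes symmetry to set $\boldsymbol{a}=e_1$, you justify the reduction explicitly by expanding $\operatorname{Re}(\boldsymbol{a}^\top\boldsymbol{\xi})=a_0^\top\xi_0-a_1^\top\xi_1+a_2^\top\xi_2-a_3^\top\xi_3=\tilde a^\top\vec\xi$ with $\|\tilde a\|=1$ and using rotational invariance; this is a cleaner version of the paper's WLOG step. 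Second, the paper keeps the full norm $\|\boldsymbol{\eta}\|$ and lower-bounds the joint event by a union bound, $\operatorname{Prob}(B\cap C)\ge\operatorname{Prob}(B)-\operatorname{Prob}(C^{c})$, controlling $\operatorname{Prob}\{\|\boldsymbol{\eta}\|\ge 3\sqrt n\}\le e^{-3n/4}$ via the Laurent--Massart inequality and bounding the Gaussian tail by an explicit integral over $[3\sqrt{\gamma\ln n},4\sqrt{\gamma\ln n}]$; you instead split off $\eta_1$ from $\tilde\eta$, exploit their independence to multiply probabilities, control $\|\tilde\eta\|^2$ by plain Markov (giving $\operatorname{Prob}(A)>5/9$), and use the Mills-ratio bound for the tail. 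Your route is more elementary (no sub-exponential concentration needed) at the price of a constant-factor rather than exponentially small loss from the norm event, which is immaterial for the stated bound. Third, your exact rewriting of the event introduces the factor $(1-t^2)^{-1/2}$, absent from the paper because its thresholds $3\sqrt{\gamma\ln n}$ and $3\sqrt n$ imply the ratio bound with no correction; your handling of it is sound (for fixed $\gamma$ one has $x^2\le 9\gamma\ln n+o_\gamma(1)$ for $n\ge n_0(\gamma)$, and the finitely many remaining $n$ with $\gamma\ln n<n$ are absorbed into $c(\gamma)$ by strict positivity of the probability, exactly the device the paper itself uses), but it is the one place where you must do bookkeeping the paper avoids by design of its thresholds.
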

\begin{proof}
By the symmetry property of the commutative quaternion sphere, without loss of generality, we assume that $\boldsymbol{a} = (1, 0, \cdots, 0)^\top$ is a given vector in $\mathbf{S}^{n}$. Let $\boldsymbol{\eta} \sim \mathcal{QN}(0, 4I_n)$, then according to Lemma \ref{lemma.quaternion sphere uniform distribution property} we know that $\boldsymbol{\xi} = \boldsymbol{\eta} /\|\boldsymbol{\eta}\|$ and $\text{Re}\left(\boldsymbol{a}^\top \boldsymbol{\xi}\right) = \text{Re}(\boldsymbol{\eta}_1) / \|\boldsymbol{\eta}\|$. Moreover, $\text{Re}(\boldsymbol{\eta}_1) \sim \mathcal{N}(0, 1)$ holds obviously from the definition of $\mathcal{QN}(0, 4I_n)$. 

To verify the inequality in this theorem, we consider the following two aspects.
\begin{enumerate}
    \item [(1)] With the above properties in hand, we have that 
$$
\begin{aligned}
\text{Prob}\left\{\text{Re}(\boldsymbol{\eta_1}) \geq 3 \sqrt{\gamma \ln n}\right\} &=\int_{3 \sqrt{\gamma \ln n}}^{+\infty} \frac{1}{\sqrt{2 \pi}} e^{-x^{2} / 2} d x \\
& \geq \int_{3 \sqrt{\gamma \ln n}}^{4 \sqrt{\gamma \ln n}} \frac{1}{\sqrt{2 \pi}} e^{-x^{2} / 2} d x \\
& \geq \int_{3 \sqrt{\gamma \ln n}}^{4 \sqrt{\gamma \ln n}} \frac{1}{\sqrt{2 \pi}} \frac{x}{4 \sqrt{\gamma \ln n}} e^{-x^{2} / 2} d x \\
&=\frac{1}{\sqrt{32 \pi \gamma \ln n}}\left(\frac{1}{n^{4.5 \gamma}}-\frac{1}{n^{8 \gamma}}\right).
\end{aligned}
$$
holds for $n \geq 2$.
    \item [(2)] As for the term $\|\boldsymbol{\eta}\|^2$, according to Definition \ref{definition.distribution}, $\|\boldsymbol{\eta}\|^2$ follows real chi-squared distribution $\chi_{4 n}^{2}$ with $4 n$ degrees of freedom. Then, we can refer to a result on the $\chi^{2}$-distribution estimation by Laurent and Massart (\cite{laurent2000adaptive} Lemma 1):
For any vector $b=\left(b_{1}, b_{2}, \ldots, b_{n}\right)^{\mathrm{T}}$ with $b_{i} \geq 0(i=1,2, \ldots, n)$, denote $z=\sum_{i=1}^{n} b_{i}\left(\eta_{i}^{2}-1\right)$, then for any $t>0$,
\begin{equation}\label{inequality.Laurent and Massart}
\text{Prob}\left\{z \geq 2\|b\| \sqrt{t}+2\|b\|_{\infty} t\right\} \leq e^{-t}. 
\end{equation}    
That is to say, let $b$ be the all-one vector and $t = \frac{3}{4}n$ leads to
$$
\text{Prob}\left\{\|\boldsymbol{\eta}\|^{2} \geq 2\sqrt{3} n + \frac{11}{2}\right\} \leq e^{-\frac{3}{4}n}.
$$
\end{enumerate}
Now, combining the assumption of $\boldsymbol{a}$ with these two inequalities, we get
$$
\begin{aligned}
\text{Prob}\left\{\text{Re}\left(\boldsymbol{a}^\top \boldsymbol{\xi}\right) \geq \sqrt{\frac{\gamma \ln n}{n}}\right\} 
 = &\text{Prob}\left\{\frac{ \text{Re}(\boldsymbol{\eta}_{1})}{ \|\boldsymbol{\eta}\|} \geq \sqrt{\frac{\gamma \ln n}{n}}\right\} \\
 \geq &\text{Prob}\left\{\text{Re}(\boldsymbol{\eta}_{1}) \geq 3 \sqrt{\gamma \ln n},\|\boldsymbol{\eta}\| \leq 3 \sqrt{n}\right\} \\
 \geq &\text{Prob}\left\{\text{Re}(\boldsymbol{\eta}_{1}) \geq 3 \sqrt{\gamma \ln n}\right\} \\
 & - \text{Prob}\left\{\|\boldsymbol{\eta}\| \geq 3 \sqrt{n}\right\} \\
 \geq &\frac{1}{\sqrt{32 \pi \gamma \ln n}}\left(\frac{1}{n^{4.5 \gamma}}-\frac{1}{n^{8 \gamma}}\right) -  e^{-\frac{3}{4}n}.
\end{aligned}
$$
Therefore, there exists $n(\gamma)>0$, depending only on $\gamma$, such that
$$
\begin{aligned}
    \text{Prob}\left\{\text{Re}\left(\boldsymbol{a}^\top \boldsymbol{\xi}\right) \geq \sqrt{\frac{\gamma \ln n}{n}}\right\} &\geq \frac{1}{\sqrt{32 \pi \gamma \ln n}}\left(\frac{1}{n^{4.5 \gamma}}-\frac{1}{n^{8 \gamma}}\right) -  e^{-\frac{3}{4}n}\\ &\geq \frac{1}{2 n^{4.5 \gamma} \sqrt{32 \gamma \pi \ln n}}, \quad \forall n \geq n(\gamma).
\end{aligned}
$$
On the other hand, $0<\gamma<n / \ln n$ implies that
\begin{align*}
    &\text{Prob}\left\{\text{Re}\left(\boldsymbol{a}^\top \boldsymbol{\xi}\right) \geq \sqrt{\frac{\gamma \ln n}{n}}\right\} \\
    > &\text{Prob}\left\{\text{Re}\left(\boldsymbol{a}^\top \boldsymbol{\xi}\right) \geq 1\right\}\\
    = &\text{Prob}\left\{\text{Re}\left(\boldsymbol{\eta}_1\right) \geq \|\boldsymbol{\eta}\| \right\} \geq 0.
\end{align*}
Thus, the following inequality
$$
t(\gamma) : = \min _{n<n(\gamma), 0<\gamma<n / \ln n, n \in \mathbb{Z}} \text{Prob}\left\{\text{Re}\left(\boldsymbol{a}^\top \boldsymbol{\xi}\right) \geq \sqrt{\frac{\gamma \ln n}{n}}\right\} \cdot n^{4.5 \gamma} \sqrt{\ln n}>0
$$
can be obtained, where $t(\gamma)$ depends only on $\gamma$. Finally, choosing $c(\gamma)=\min \{t(\gamma), 1 /(2 \sqrt{32 \gamma \pi})\}$ completes the proof.
\end{proof}

The above result has two key ingredients: the approximation ratio on the left-hand side and the probability bound on the right-hand side. Compared with its real counterpart (Lemma 2.5 in \cite{he2014probability}), our probability bound appears slightly weaker. This is a consequence of the quaternion domain, as an $n$-dimensional quaternion vector drawn from a uniform spherical distribution is composed of $4n$ real random variables. A naive approach would be to use the real representation of the quaternion vectors with the existing real domain result. While yielding the same probability bound, this approach results in a worse approximation ratio. Furthermore, we show that the $n^{-4.5}$ term in our probability bound can be improved to $n^{-(2+\delta)}$ for any $\delta > 0$. This improved result is presented in the following proposition, and its proof is deferred to Appendix B.
    \begin{proposition}\label{prop.epsilon version}
    If $\boldsymbol{\xi}$ and $\boldsymbol{a}$ are both uniform distributions on quaternion sphere $\mathbf{S}^{n}$, then for $\gamma>0$ with $\gamma \ln n<n$ and $\delta > 0$, there exists a constant $c(\gamma, \delta)>0$, such that
    $$
    \operatorname{Prob}\left\{\operatorname{Re}\left(\boldsymbol{a}^\top \boldsymbol{\xi}\right) \geq \sqrt{\frac{\gamma \ln n}{n}}\|\boldsymbol{a}\|\right\} \geq \frac{c(\gamma, \delta)}{n^{(2 + \delta + \delta^2/2) \gamma} \sqrt{\ln n}}.
    $$
    \end{proposition}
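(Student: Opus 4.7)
The plan is to follow the proof of Theorem~\ref{thm.constant probability inequality} almost verbatim, but to replace the absolute constant $3$ in the threshold $3\sqrt{\gamma \ln n}$ with the sharper quantity $c_\delta := \sqrt{4 + 2\delta + \delta^2}$. This choice is engineered so that $c_\delta^2/2 = 2 + \delta + \delta^2/2$, which is exactly the target exponent of $n$. Since $\boldsymbol{a} \in \mathbf{S}^n$ gives $\|\boldsymbol{a}\| = 1$, the $\|\boldsymbol{a}\|$ factor in the statement is inessential, and the spherical symmetry reduction of Theorem~\ref{thm.constant probability inequality} applies unchanged: take $\boldsymbol{a} = (1, 0, \ldots, 0)^\top$ and $\boldsymbol{\xi} = \boldsymbol{\eta}/\|\boldsymbol{\eta}\|$ with $\boldsymbol{\eta} \sim \mathcal{QN}(0, 4I_n)$ via Lemma~\ref{lemma.quaternion sphere uniform distribution property}, so that $\operatorname{Re}(\boldsymbol{a}^\top \boldsymbol{\xi}) = \operatorname{Re}(\boldsymbol{\eta}_1)/\|\boldsymbol{\eta}\|$ with $\operatorname{Re}(\boldsymbol{\eta}_1) \sim \mathcal{N}(0,1)$ and $\|\boldsymbol{\eta}\|^2 \sim \chi^2_{4n}$.

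I then reproduce the two separate estimates with the new constant. For the numerator, redoing the Gaussian integration trick on the interval $[c_\delta\sqrt{\gamma\ln n},\,(c_\delta+1)\sqrt{\gamma\ln n}]$ (inserting the weight $x/((c_\delta+1)\sqrt{\gamma\ln n}) \leq 1$) yields a closed-form lower bound of order $n^{-c_\delta^2\gamma/2}/\sqrt{\gamma\ln n}$ minus a strictly smaller term of order $n^{-(c_\delta+1)^2\gamma/2}/\sqrt{\gamma\ln n}$. For the denominator, I apply the Laurent--Massart bound \eqref{inequality.Laurent and Massart} with the all-ones vector in $\mathbb{R}^{4n}$ (so $\|b\| = 2\sqrt{n}$, $\|b\|_\infty = 1$) and the tailored choice $t = \delta^2 n/4$. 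A direct calculation shows that the admissible deviation $4\sqrt{nt} + 2t = 2\delta n + \delta^2 n /2$ fits strictly inside the available slack $c_\delta^2 n - 4n = 2\delta n + \delta^2 n$, giving $\operatorname{Prob}\{\|\boldsymbol{\eta}\|^2 \geq c_\delta^2 n\} \leq e^{-\delta^2 n/4}$.

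Combining via the union-bound step
$$
\operatorname{Prob}\left\{\tfrac{\operatorname{Re}(\boldsymbol{\eta}_1)}{\|\boldsymbol{\eta}\|} \geq \sqrt{\tfrac{\gamma \ln n}{n}}\right\} \geq \operatorname{Prob}\left\{\operatorname{Re}(\boldsymbol{\eta}_1) \geq c_\delta \sqrt{\gamma \ln n}\right\} - \operatorname{Prob}\left\{\|\boldsymbol{\eta}\|^2 \geq c_\delta^2 n\right\},
$$
and noting that the polynomial-in-$n$ first term dominates the exponentially small second term for all $n$ beyond some threshold $n(\gamma,\delta)$, I obtain a lower bound of order $n^{-(2+\delta+\delta^2/2)\gamma}/\sqrt{\ln n}$ for large $n$. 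The finite remaining range $n < n(\gamma,\delta)$ is handled exactly as in Theorem~\ref{thm.constant probability inequality}: the probability is strictly positive for each such $n$, so taking the minimum over this finite set produces a positive constant, and combining both regimes yields $c(\gamma,\delta) > 0$. The main obstacle here is purely quantitative bookkeeping: a naive substitution $c = 2+\delta$ only delivers the weaker exponent $(2+2\delta+\delta^2/2)\gamma$, so one is forced to choose the smaller $c_\delta = \sqrt{4+2\delta+\delta^2}$ and then to verify that Laurent--Massart still supplies an exponential chi-squared tail within this tighter room. The specific selection $t = \delta^2 n/4$ is the one new ingredient making this verification go through.
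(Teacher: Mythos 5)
Your proof is correct and follows essentially the same route as the paper's Appendix B: reduce by spherical symmetry to $\boldsymbol{a}=e_1$ and $\boldsymbol{\xi}=\boldsymbol{\eta}/\|\boldsymbol{\eta}\|$, lower-bound the Gaussian tail of $\operatorname{Re}(\boldsymbol{\eta}_1)$ by inserting the weight $x$ divided by the upper integration limit on a finite interval, control $\|\boldsymbol{\eta}\|^2\sim\chi^2_{4n}$ via the Laurent--Massart bound \eqref{inequality.Laurent and Massart} with a $\delta$-dependent $t$, combine by a union bound, and absorb the finitely many small $n$ into the constant exactly as in Theorem \ref{thm.constant probability inequality}. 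The only substantive difference is your choice of constants: the paper thresholds at $(2+\delta)\sqrt{\gamma\ln n}$ and $(2+\delta)\sqrt{n}$ with $t=\delta^2 n/2$, while you take $c_\delta=\sqrt{4+2\delta+\delta^2}$ and $t=\delta^2 n/4$; your bookkeeping is in fact the tighter one, since $(2+\delta)^2/2=2+2\delta+\delta^2/2$, so the paper's displayed exponent $(2+\delta+\delta^2/2)\gamma$ matches its own threshold only after relabeling $\delta$, whereas your $c_\delta$ (with $c_\delta^2/2=2+\delta+\delta^2/2$) yields the stated exponent literally, and your check that $4\sqrt{nt}+2t=2\delta n+\delta^2 n/2$ fits inside the slack $(2\delta+\delta^2)n$ correctly secures the chi-squared tail in this tighter regime.
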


Based on the Theorem \ref{thm.constant probability inequality}, we can easily deduce the following corollary for a more general case.
\begin{corollary}\label{cor all a}
If $\boldsymbol{\xi}$ follows a uniform distribution on quaternion sphere $\mathbf{S}^{n}$, then for any $\boldsymbol{a} \in \mathbb{H}^n$ and $\gamma>0$ with $\gamma \ln n<n$, there exists a constant $c(\gamma)>0$ such that
$$
\operatorname{Prob}\left\{\operatorname{Re}\left(\boldsymbol{a}^\top \boldsymbol{\xi}\right) \geq \sqrt{\frac{\gamma \ln n}{n}}\|\boldsymbol{a}\|\right\} \geq \frac{c(\gamma)}{n^{4.5 \gamma} \sqrt{\ln n}}.
$$
\end{corollary}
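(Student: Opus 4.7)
The plan is to reduce the general case to Theorem~\ref{thm.constant probability inequality} by a single homogeneity step. If $\boldsymbol{a} = 0$ the statement is trivial (the event has probability one), so assume $\boldsymbol{a} \neq 0$ and set $\hat{\boldsymbol{a}} := \boldsymbol{a}/\|\boldsymbol{a}\|$, which is a deterministic element of $\mathbf{S}^{n}$.

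First I would observe that the map $\boldsymbol{a} \mapsto \re(\boldsymbol{a}^\top \boldsymbol{\xi})$ is $\mathbb{R}$-linear in $\boldsymbol{a}$: expanding a commutative-quaternion product in the basis $\{1,\bi,\bj,\bk\}$ and keeping only the real part yields a real bilinear expression in the components of $\boldsymbol{a}$ and $\boldsymbol{\xi}$. In particular,
\[
\re(\boldsymbol{a}^\top \boldsymbol{\xi}) \;=\; \|\boldsymbol{a}\|\cdot \re(\hat{\boldsymbol{a}}^\top \boldsymbol{\xi}),
\]
so that the event $\{\re(\boldsymbol{a}^\top \boldsymbol{\xi}) \ge \sqrt{\gamma \ln n/n}\,\|\boldsymbol{a}\|\}$ coincides with $\{\re(\hat{\boldsymbol{a}}^\top \boldsymbol{\xi}) \ge \sqrt{\gamma \ln n/n}\}$. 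It therefore suffices to invoke Theorem~\ref{thm.constant probability inequality} with the deterministic unit direction $\hat{\boldsymbol{a}}$ in the slot where that theorem uses $\boldsymbol{a}$.

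The only subtle point, and where I would write most carefully, is that Theorem~\ref{thm.constant probability inequality} is phrased with $\boldsymbol{a}$ random uniform, whereas we must apply it to a fixed $\hat{\boldsymbol{a}}$. The proof of that theorem, however, uses rotational symmetry at the very first step to replace $\boldsymbol{a}$ by the canonical vector $(1,0,\ldots,0)^\top$, so what it really establishes is the bound for an arbitrary fixed direction on $\mathbf{S}^{n}$. To justify using it in the present form I would pass to the real representation: via Definition~\ref{definition.quaternion sphere uniform distribution} and Lemma~\ref{lemma.quaternion sphere uniform distribution property}, a short componentwise calculation shows that $\re(\hat{\boldsymbol{a}}^\top \boldsymbol{\xi}) = \langle \tilde{\boldsymbol{a}}, \vec{\xi}\rangle$ for some real vector $\tilde{\boldsymbol{a}} \in \mathbb{R}^{4n}$ with $\|\tilde{\boldsymbol{a}}\| = \|\hat{\boldsymbol{a}}\| = 1$, and the rotational invariance of the uniform law on $\mathbf{S}_{\mathbb{R}}^{4n}$ then identifies the distribution of this scalar with that of the first coordinate $\vec{\xi}_{1}$, i.e.\ the quantity controlled in the proof of Theorem~\ref{thm.constant probability inequality}. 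Plugging this into the homogeneity identity above yields the claimed bound with the same constant $c(\gamma)$ inherited from Theorem~\ref{thm.constant probability inequality}, and no further calculation is required.
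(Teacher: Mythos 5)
Your proposal is correct and follows exactly the deduction the paper intends: the paper offers no written proof of this corollary, merely asserting it follows easily from Theorem~\ref{thm.constant probability inequality}, and your argument---normalizing $\boldsymbol{a}$ by homogeneity (with the trivial $\boldsymbol{a}=0$ case) and noting that the theorem really controls a fixed unit direction via the rotational invariance used at the start of its proof, made rigorous through the real representation $\re(\hat{\boldsymbol{a}}^\top\boldsymbol{\xi})=\langle\tilde{\boldsymbol{a}},\vec{\xi}\rangle$ with $\|\tilde{\boldsymbol{a}}\|=1$---is precisely that deduction spelled out. No gaps; the constant $c(\gamma)$ is inherited unchanged, as claimed.
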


\section{Approximation for Multilinear Form Optimization}\label{section.algorithm}
In this section, we present a polynomial-time randomized algorithm for problem $(F)$:
\begin{align*}
        (F) \quad & \max \ \re \boldsymbol{F}\left(\boldsymbol{x}^{1}, \boldsymbol{x}^{2}, \ldots, \boldsymbol{x}^{d}\right) \\
         &\operatorname{s.t.} \quad \boldsymbol{x}^{k} \in \mathbf{S}^{n_{k}}, k=1,2, \ldots, d,
\end{align*}
and then establish the performance ratio using the previously derived probability inequality. Without loss of generality, we assume that $n_1\le n_2\le \cdots\le n_d$ in this section.
%\begin{remark}We have mentioned that the order is important in the quaternion mutilinear form, but here we can assume  $n_{1} \leq n_{2} \leq \cdots \leq n_{d}$ due to we only focus on the real part.\end{remark}
\begin{algorithm}
\caption{Randomized Algorithm for $(F)$}
\label{Alg: Randomized Alg 1}
1. Randomly and independently generate $\boldsymbol{\xi}^{k}$ uniformly on $\mathbf{S}^{n_{k}}$ for $k=1,2, \ldots, d-2$; \\
2. Solve the following commutative quaternion multilinear form optimization problem
$$
\begin{array}{ll}
\max & \operatorname{Re} \boldsymbol{F}\left(\boldsymbol{\xi}^{1}, \boldsymbol{\xi}^{2}, \ldots, \boldsymbol{\xi}^{d-2}, \boldsymbol{x}^{d-1}, \boldsymbol{x}^{d}\right) \\
\text { s.t. } & \boldsymbol{x}^{d-1} \in \mathbf{S}^{n_{d-1}}, \boldsymbol{x}^{d} \in \mathbf{S}^{n_{d}},
\end{array}
$$
and get its approximate solution $\left(\boldsymbol{\xi}^{d-1}, \boldsymbol{\xi}^{d}\right)$; 
\\
3. Compute the objective value $\operatorname{Re} \boldsymbol{F}\left(\boldsymbol{\xi}^{1}, \boldsymbol{\xi}^{2}, \ldots, \boldsymbol{\xi}^{d}\right)$; \\
4. Repeat the above procedures independently $\ln \frac{1}{\epsilon} \cdot \frac{1}{c(\gamma)^{d-2}} \prod_{k=1}^{d-2} n_k^{4.5 \gamma} \sqrt{\ln n_k}$ times for any given $\epsilon>0$ and $\gamma \in\left(0, \frac{n_{1}}{\ln n_{1}}\right)$, and choose a solution with the largest objective value.
\end{algorithm}

It is worth noting that the approximate ratio of Algorithm \ref{Alg: Randomized Alg 1} mainly depends on the calculation time of step 2. By Lemma \ref{le.d=2}, the step 2 can be solved in polynomial time.
With this guarantee, we can prove the approximation ratio of Algorithm \ref{Alg: Randomized Alg 1}.
\begin{theorem}\label{tm.approximation theorem}
The randomized algorithm \ref{Alg: Randomized Alg 1} solves $\left(F\right)$ with an approximation ratio of $\gamma^{\frac{d-2}{2}}\left(\prod_{k=1}^{d-2} \sqrt{\frac{\ln n_k}{n_k}}\right)$, i.e., for any given $\epsilon > 0$ and $\gamma \in \left(0, \frac{n_1}{\ln n_1}\right)$, a feasible solution $\left(\boldsymbol{y}^{1}, \boldsymbol{y}^{2}, \ldots, \boldsymbol{y}^{d}\right)$ can be generated in polynomial time with probability at least $1-\epsilon$, such that
$$
\operatorname{Re} \boldsymbol{F}\left(\boldsymbol{y}^{1}, \boldsymbol{y}^{2}, \ldots, \boldsymbol{y}^{d}\right) \geq \gamma^{\frac{d-2}{2}}\left(\prod_{k=1}^{d-2} \sqrt{\frac{\ln n_{k}}{n_{k}}}\right) v^*(F),
$$
where $v^*(F)$ is the optimal value of $\left(F\right)$.
\end{theorem}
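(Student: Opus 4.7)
The plan is to analyze Algorithm \ref{Alg: Randomized Alg 1} by tracking, across the sampling stages, how much the conditional optimum of the remaining subproblem can shrink after plugging in one uniformly random direction, and then to amplify a single-trial success bound via the stated number of repetitions. Let $(\boldsymbol{x}^{1,*}, \ldots, \boldsymbol{x}^{d,*})$ be an optimal solution of $(F)$ with $v^*(F) = \operatorname{Re}\boldsymbol{F}(\boldsymbol{x}^{1,*}, \ldots, \boldsymbol{x}^{d,*})$, and for $k = 0, 1, \ldots, d-2$ define the residual value
\[
v_k \;=\; \max_{\boldsymbol{x}^{k+1},\ldots,\boldsymbol{x}^d}\operatorname{Re}\boldsymbol{F}\!\left(\boldsymbol{\xi}^{1},\ldots,\boldsymbol{\xi}^{k}, \boldsymbol{x}^{k+1},\ldots,\boldsymbol{x}^{d}\right),
\]
with associated conditional maximizers $\boldsymbol{x}^{k+1,*}_{(k)},\ldots,\boldsymbol{x}^{d,*}_{(k)}$. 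In particular $v_0 = v^*(F)$, and the last step of the algorithm returns, via Lemma \ref{le.d=2}, a pair achieving exactly $v_{d-2}$.

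The first step is to pass from $v_{k-1}$ to $v_k$ for each $k = 1, \ldots, d-2$. Conditional on $\boldsymbol{\xi}^1, \ldots, \boldsymbol{\xi}^{k-1}$, freeze the tail at the conditional maximizers and set
\[
\boldsymbol{a}^k \;=\; \boldsymbol{F}\!\left(\boldsymbol{\xi}^{1},\ldots,\boldsymbol{\xi}^{k-1}, \bullet, \boldsymbol{x}^{k+1,*}_{(k-1)},\ldots,\boldsymbol{x}^{d,*}_{(k-1)}\right) \in \mathbb{H}^{n_k}.
\]
Then $v_{k-1} = \operatorname{Re}((\boldsymbol{a}^k)^\top \boldsymbol{x}^{k,*}_{(k-1)})$, and by the Cauchy--Schwarz-type bound recorded in Section \ref{sec.preliminary} we get $\|\boldsymbol{a}^k\| \geq v_{k-1}$. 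Since $\boldsymbol{\xi}^k$ is independent of $\boldsymbol{a}^k$ and uniform on $\mathbf{S}^{n_k}$, Corollary \ref{cor all a} applied at $\boldsymbol{a}^k$ yields, with conditional probability at least $c(\gamma)/(n_k^{4.5\gamma}\sqrt{\ln n_k})$,
\[
v_k \;\geq\; \operatorname{Re}\boldsymbol{F}\!\left(\boldsymbol{\xi}^{1},\ldots,\boldsymbol{\xi}^{k}, \boldsymbol{x}^{k+1,*}_{(k-1)},\ldots,\boldsymbol{x}^{d,*}_{(k-1)}\right) \;=\; \operatorname{Re}\!\left((\boldsymbol{a}^k)^\top \boldsymbol{\xi}^k\right) \;\geq\; \sqrt{\tfrac{\gamma \ln n_k}{n_k}}\,\|\boldsymbol{a}^k\| \;\geq\; \sqrt{\tfrac{\gamma \ln n_k}{n_k}}\,v_{k-1}.
\]

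Iterating this bound from $k = 1$ to $k = d-2$ and multiplying the conditional probabilities (all events are on independent random vectors once conditioning is properly staged), one trial of Steps 1--3 produces, with probability at least $p := c(\gamma)^{d-2}/\prod_{k=1}^{d-2}\!\bigl(n_k^{4.5\gamma}\sqrt{\ln n_k}\bigr)$, an output of value
\[
v_{d-2} \;\geq\; \gamma^{(d-2)/2}\Bigl(\textstyle\prod_{k=1}^{d-2}\sqrt{\ln n_k/n_k}\Bigr)\,v^*(F).
\]
The number of independent repetitions $N$ in Step 4 is chosen so that $Np \geq \ln(1/\epsilon)$, hence the probability that none of the $N$ trials succeeds is at most $(1-p)^N \leq e^{-Np} \leq \epsilon$, giving the claimed success probability of at least $1 - \epsilon$. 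Polynomial running time follows because each trial solves a single bilinear problem in polynomial time via Lemma \ref{le.d=2}, and $N$ is polynomial in the $n_k$'s for fixed $\gamma, d, \epsilon$.

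The main obstacle is the conditional/inductive bookkeeping in the middle step: one must verify that at each stage the vector $\boldsymbol{a}^k$, although built from the previous random samples and from conditional maximizers that themselves depend on those samples, is measurable with respect to the past and therefore acts as a deterministic target for the fresh sample $\boldsymbol{\xi}^k$, so that Corollary \ref{cor all a} applies verbatim; once this is settled, telescoping the lower bounds on $v_k/v_{k-1}$ and multiplying the conditional probabilities via the tower rule gives the stated approximation ratio and probability.
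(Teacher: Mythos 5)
Your proposal is correct and is essentially the paper's own argument: the paper proves the same telescoping bound by backward induction on the subproblems $(F_t)$ (your $v_k$ is exactly $v^*(F_{d-k})$), using Corollary \ref{cor all a} on the vector $\boldsymbol{F}(\boldsymbol{\xi}^1,\ldots,\boldsymbol{\xi}^{k-1},\bullet,\boldsymbol{z}^{k+1},\ldots,\boldsymbol{z}^d)$ together with the Cauchy--Schwarz bound $v^*\le\|\boldsymbol{a}\|$, multiplying conditional probabilities, and amplifying over $\ln(1/\epsilon)/\theta$ trials exactly as you do. Your forward-recursion phrasing and the explicit measurability remark about the conditional maximizers are just a cleaner presentation of the same steps.
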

\begin{proof}
For the problem of degree $d$, our proof is based on mathematical induction on $t=2,3, \ldots, d$. Suppose $\left(\boldsymbol{\xi}^1, \boldsymbol{\xi}^2, \ldots, \boldsymbol{\xi}^d\right)$ is an approximate solution generated by the first three steps of Algorithm \ref{Alg: Randomized Alg 1}. For any $t=2,3, \ldots, d$, we treat $\left(\boldsymbol{\xi}^1, \boldsymbol{\xi}^2, \ldots, \boldsymbol{\xi}^{d-t}\right)$ as given parameters and define the following problem
    \begin{align*}
       (F_t) \quad  & \max \ \operatorname{Re} \boldsymbol{F}\left(\boldsymbol{\xi}^1, \boldsymbol{\xi}^2, \ldots, \boldsymbol{\xi}^{d-t}, \boldsymbol{x}^{d-t+1}, \boldsymbol{x}^{d-t+2} \ldots, \boldsymbol{x}^d\right), \\
         &\operatorname{s.t.} \quad \boldsymbol{x}^k \in \mathbf{S}^{n_k}, k=d-t+1, d-t+2, \ldots, d,
    \end{align*}
whose optimal value is denoted by $v^*\left(F_t\right)$. By applying the first three steps of Algorithm \ref{Alg: Randomized Alg 1} to problem $\left(F_t\right)$, we get a randomly generated feasible solution $\left(\boldsymbol{\xi}^{d-t+1}, \boldsymbol{\xi}^{d-t}, \ldots, \boldsymbol{\xi}^d\right)$ of $\left(F_t\right)$.
In the remaining, we shall prove that $\left(\boldsymbol{\xi}^{d-t+1}, \boldsymbol{\xi}^{d-t+2}, \ldots, \boldsymbol{\xi}^d\right)$ is a $\gamma^{\frac{t-2}{2}}\left(\prod_{k=d-t+1}^{d-2} \sqrt{\frac{\ln n_k}{n_k}}\right)$-approximate solution of $\left(F_t\right)$ with a nontrivial probability. In other words, for any $t=2,3, \ldots, d$, it holds that
\begin{equation}\label{prob bound}
\begin{aligned}
& \underset{\left(\boldsymbol{\xi}^{d-t+1}, \boldsymbol{\xi}^{d-t+2}, \ldots, \boldsymbol{\xi}^d\right)}{\operatorname{Prob}}\left\{\operatorname{Re} \boldsymbol{F}\left(\boldsymbol{\xi}^1, \boldsymbol{\xi}^2, \ldots, \boldsymbol{\xi}^d\right) \geq \gamma^{\frac{t-2}{2}}\left(\prod_{k=d-t+1}^{d-2} \sqrt{\frac{\ln n_k}{n_k}}\right) v^*\left(F_t\right)\right\} \\
& \geq \frac{c(\gamma)^{t-2}}{\prod_{k=d-t+1}^{d-2} n_k^{4.5 \gamma} \sqrt{\ln n_k}}
\end{aligned}
\end{equation}

For the trivial case $t=2$, which coincides with the Step 2 of Algorithm \ref{Alg: Randomized Alg 1} and it could be solved exactly in polynomial-time. Suppose now \eqref{prob bound} holds for $t-1$. To prove that \eqref{prob bound} holds for $t$, we notice that $\left(\boldsymbol{\xi}^1, \boldsymbol{\xi}^2, \ldots, \boldsymbol{\xi}^{d-t}\right)$ are given fixed parameters. Denote $\left(\boldsymbol{z}^{d-t+1}, \boldsymbol{z}^{d-t+2}, \ldots, \boldsymbol{z}^d\right)$ to be an optimal solution of $\left(F_t\right)$, and define the following two events
% \begin{align*}
%     E_1 = \bigg\{\boldsymbol{\xi}^{d-t+1} \in \mathbf{S}^{n_{d-t+1}}: &\operatorname{Re} \boldsymbol{F}\left(\boldsymbol{\xi}^1, \ldots, \boldsymbol{\xi}^{d-t}, \boldsymbol{\xi}^{d-t+1}, \boldsymbol{z}^{d-t+2}, \ldots, \boldsymbol{z}^d\right) \\
%     &\geq \gamma^{\frac{1}{2}}\left(\sqrt{\frac{\ln n_{d-t+1}}{n_{d-t+1}}}\right) v^*\left(F_t\right)\bigg\}
% \end{align*}
$$
E_1 = \bigg\{\boldsymbol{\xi}^{d-t+1} \in \mathbf{S}^{n_{d-t+1}}: \begin{array}{c}
    \operatorname{Re} \boldsymbol{F}\left(\boldsymbol{\xi}^1, \ldots, \boldsymbol{\xi}^{d-t}, \boldsymbol{\xi}^{d-t+1}, \boldsymbol{z}^{d-t+2}, \ldots, \boldsymbol{z}^d\right) \\
    \geq \gamma^{\frac{1}{2}}\left(\sqrt{\frac{\ln n_{d-t+1}}{n_{d-t+1}}}\right) v^*\left(F_t\right)
\end{array}\bigg\}
$$
and
$$
\begin{aligned}
E_2 & =  \Bigg \{\boldsymbol{\xi}^{d-t+1} \in E_1, \boldsymbol{\xi}^{d-t+2} \in \mathbf{S}^{n_{d-t+2}}, \ldots, \boldsymbol{\xi}^d \in \mathbf{S}^{n_{d}}: \operatorname{Re} \boldsymbol{F}\left(\boldsymbol{\xi}^1, \ldots, \boldsymbol{\xi}^{d}\right)\\ 
& \geq \gamma^{\frac{t-3}{2}}\left(\prod_{k=d-t+2}^{d-2} \sqrt{\frac{\ln n_{k}}{n_{k}}}\right) \operatorname{Re} \boldsymbol{F}\left(\boldsymbol{\xi}^1, \ldots, \boldsymbol{\xi}^{d-t}, \boldsymbol{\xi}^{d-t+1}, \boldsymbol{z}^{d-t+2}, \ldots, \boldsymbol{z}^d\right) \Bigg \}.
\end{aligned}
$$
Then, we have
\begin{equation}\label{two lower bound}
\begin{aligned}
& \underset{\left(\boldsymbol{\xi}^{d-t+1}, \boldsymbol{\xi}^{d-t+2}, \ldots, \boldsymbol{\xi}^d\right)}{\operatorname{Prob}}\left\{\operatorname{Re} \boldsymbol{F}\left(\boldsymbol{\xi}^1, \boldsymbol{\xi}^2, \ldots, \boldsymbol{\xi}^d\right) \geq \gamma^{\frac{t-2}{2}}\left(\prod_{k=d-t+1}^{d-2} \sqrt{\frac{\ln n_k}{n_k}}\right) v^*\left(F_t\right)\right\} \\
& \geq  \underset{\left(\boldsymbol{\xi}^{d-t+1}, \boldsymbol{\xi}^{d-t+2}, \ldots, \boldsymbol{\xi}^d\right)}{\operatorname{Prob}}\left\{\left(\boldsymbol{\xi}^{d-t+1}, \boldsymbol{\xi}^{d-t+2}, \ldots, \boldsymbol{\xi}^d\right) \in E_2 \ \Big | \ \boldsymbol{\xi}^{d-t+1} \in E_1\right\}\times\\
&\quad\underset{\boldsymbol{\xi}^{d-t+1}}{\operatorname{Prob}}\left\{\boldsymbol{\xi}^{d-t+1} \in E_1\right\}.
\end{aligned}
\end{equation}
To proceed, we provide a lower bound for \eqref{two lower bound}. Firstly, note that $\big(\boldsymbol{z}^{d-t+2},$\\$ \boldsymbol{z}^{d-t+3}, \ldots, \boldsymbol{z}^d\big)$ is a feasible solution of $\left(F_{t-1}\right)$, from which we derive 
$$\operatorname{Re} \boldsymbol{F}\left(\boldsymbol{\xi}^1, \ldots, \boldsymbol{\xi}^{d-t}, \boldsymbol{\xi}^{d-t+1}, \boldsymbol{z}^{d-t+2}, \ldots, \boldsymbol{z}^d\right) \leq v^*\left(F_{t-1}\right),$$
which further implies
$$
\begin{aligned}
& \underset{\left(\boldsymbol{\xi}^{d-t+1}, \boldsymbol{\xi}^{d-t+2}, \ldots, \boldsymbol{\xi}^d\right)}{\operatorname{Prob}}\left\{\left(\boldsymbol{\xi}^{d-t+1}, \boldsymbol{\xi}^{d-t+2}, \ldots, \boldsymbol{\xi}^d\right) \in E_2 \ \Big | \ \boldsymbol{\xi}^{d-t+1} \in E_1\right\} \\
\geq & \underset{\left(\boldsymbol{\xi}^{d-t+1}, \boldsymbol{\xi}^{d-t+2}, \ldots, \boldsymbol{\xi}^d\right)}{\operatorname{Prob}}\Big\{\begin{array}{c}
    \operatorname{Re} \boldsymbol{F}\left(\boldsymbol{\xi}^1, \boldsymbol{\xi}^2, \ldots, \boldsymbol{\xi}^d\right) \\
    \geq \gamma^{\frac{t-3}{2}}\left(\prod_{k=d-t+2}^{d-2} \sqrt{\frac{\ln n_{k}}{n_{k}}}\right) v^*\left(F_{t-1}\right) \ 
\end{array} \Big | \ \boldsymbol{\xi}^{d-t+1} \in E_1 \Big\} \\
\geq & \frac{c(\gamma)^{t-3}}{\prod_{k=d-t+2}^{d-2} n_k^{4.5 \gamma} \sqrt{\ln n_k}},
\end{aligned}
$$
where the last inequality is due to the induction assumption on $t-1$. Secondly, for the term $\underset{\boldsymbol{\xi}^{d-t+1}} {\operatorname{Prob}}\left\{\boldsymbol{\xi}^{d-t+1} \in E_1\right\}$ in \eqref{two lower bound}, it holds that
$$
\begin{aligned}
& \underset{\boldsymbol{\xi}^{d-t+1}}{\operatorname{Prob}}\left\{\boldsymbol{\xi}^{d-t+1} \in E_1\right\} \\
= & \underset{\boldsymbol{\xi}^{d-t+1}}{\operatorname{Prob}}\left\{\begin{array}{c}
     \operatorname{Re} \boldsymbol{F}\left(\boldsymbol{\xi}^1, \ldots, \boldsymbol{\xi}^{d-t}, \boldsymbol{\xi}^{d-t+1}, \boldsymbol{z}^{d-t+2}, \ldots, \boldsymbol{z}^d\right)\\
     \geq \gamma^{\frac{1}{2}}\left(\sqrt{\frac{\ln n_{d-t+1}}{n_{d-t+1}}}\right) v^*\left(F_t\right) 
\end{array}\right\} \\
\geq & \underset{\boldsymbol{\xi}^{d-t+1}}{\operatorname{Prob}}\left\{\begin{array}{cc}
     \operatorname{Re} \boldsymbol{F}\left(\boldsymbol{\xi}^1, \ldots, \boldsymbol{\xi}^{d-t}, \boldsymbol{\xi}^{d-t+1}, \boldsymbol{z}^{d-t+2}, \ldots, \boldsymbol{z}^d\right)\geq \\
    \gamma^{\frac{1}{2}}\left(\sqrt{\frac{\ln n_{d-t+1}}{n_{d-t+1}}}\right) \left\|\boldsymbol{F}\left(\boldsymbol{\xi}^1, \ldots, \boldsymbol{\xi}^{d-t}, \bullet, \boldsymbol{z}^{d-t+2}, \ldots, \boldsymbol{z}^d\right)\right\|
\end{array}\right\} \\
\geq & \frac{c(\gamma)}{n_{d-t+1}^{4.5\gamma}\sqrt{\ln n_{d-t+1}}},
\end{aligned}
$$
where first inequality is because
\begin{align*}
    v_{\max }\left(F_t\right) = &\operatorname{Re} \boldsymbol{F}\left(\boldsymbol{\xi}^1, \ldots, \boldsymbol{\xi}^{d-t}, \boldsymbol{z}^{d-t+1}, \boldsymbol{z}^{d-t+2}, \ldots, \boldsymbol{z}^d\right) \\
    \leq &\left\|\boldsymbol{F}\left(\boldsymbol{\xi}^1, \ldots, \boldsymbol{\xi}^{d-t}, \bullet, \boldsymbol{z}^{d-t+2}, \ldots, \boldsymbol{z}^d\right)\right\|.
\end{align*}
and the last inequality comes from Corollary \ref{cor all a}. 
With the above two bounds established, the lower bound for the right-hand side of \eqref{two lower bound} is showed as
$$
\begin{aligned}
& \underset{\left(\boldsymbol{\xi}^{d-t+1}, \boldsymbol{\xi}^{d-t+2}, \ldots, \boldsymbol{\xi}^d\right)}{\operatorname{Prob}}\left\{\operatorname{Re} \boldsymbol{F}\left(\boldsymbol{\xi}^1, \boldsymbol{\xi}^2, \ldots, \boldsymbol{\xi}^d\right) \geq \gamma^{\frac{t-2}{2}}\left(\prod_{k=d-t+1}^{d-2} \sqrt{\frac{\ln n_k}{n_k}}\right) v^*\left(F_t\right)\right\} \\
\geq & \frac{c(\gamma)^{t-3}}{\prod_{k=d-t+2}^{d-2} n_k^{4.5 \gamma} \sqrt{\ln n_k}} \cdot \frac{c(\gamma)}{n_{d-t+1}^{4.5\gamma}\sqrt{\ln n_{d-t+1}}} \\ 
= & \frac{c(\gamma)^{t-2}}{\prod_{k=d-t+1}^{d-2} n_k^{4.5 \gamma} \sqrt{\ln n_k}}.
\end{aligned}
$$
To this end, the inequality \eqref{prob bound} is proved via induction on $t$. Given that $\left(F_{t=d}\right)$ is equal to $\left(F\right)$, the first three steps of Algorithm \ref{Alg: Randomized Alg 1} can generate an approximate solution for $\left(F\right)$ with an approximation ratio of $\gamma^{\frac{d-2}{2}}\left(\prod_{k=1}^{d-2} \sqrt{\frac{\ln n_k}{n_k}}\right)$ and with a probability of at least $\frac{c(\gamma)^{d-2}}{\prod_{k=1}^{d-2} n_k^{4.5 \gamma} \sqrt{\ln n_k}} = \theta$. Considering the last step of Algorithm \ref{Alg: Randomized Alg 1}, if we independently draw 
$$
\frac{\ln \frac{1}{\epsilon}}{\theta} = \ln \frac{1}{\epsilon} \cdot \frac{1}{c(\gamma)^{d-2}} \prod_{k=1}^{d-2} n_k^{4.5 \gamma} \sqrt{\ln n_k}
$$
trials and choose a solution with the highest objective value, then the probability of success is at least $1 - (1 - \theta)^{\frac{\ln \frac{1}{\epsilon}}{\theta}} \geq 1 - \epsilon$.
\end{proof}
\begin{remark}
    Our algorithm matches the approximation ratio of its real counterpart (Theorem 4.3 in \cite{he2014probability}), but it requires a greater number of independent samples (Line 4 in Algorithm \ref{Alg: Randomized Alg 1}) because our probability bound is slightly weaker.
\end{remark}

\section{Approximation for Commutative Quaternion Homogeneous Polynomial Optimization }\label{section.homogenous}
This section is concerned with the optimization of homogeneous polynomial $\boldsymbol{H}(\bx)$ in the commutative quaternion domain. For clarity, we restate the problem as follows:
\begin{align*}
    (P) \quad & \max \ \re \boldsymbol{H}(\bx) \\
         &\operatorname{s.t.} \quad \bx \in \mathbf{S}^n.
\end{align*}
%In fact, by employing the tensor relaxation method \cite{he2014probability,jiang2014approximation}, the following lemma establishes the connection between the problem $(P)$ and problem $(F)$.
Using the tensor relaxation method \cite{he2014probability,jiang2014approximation}, we develop Algorithm \ref{Alg: Randomized Alg 2} to solve problem (P). This algorithm uses Algorithm \ref{Alg: Randomized Alg 1} as a subroutine. The following lemma establishes the connection between problem (P) and problem (F) that justifies this design choice.
\begin{algorithm}
\caption{Randomized Algorithm for $(P)$}
\label{Alg: Randomized Alg 2}
1. Solve the relaxed problem $(F)$ through Algorithm \ref{Alg: Randomized Alg 1} and get the solution $(\hat{\bx}^1,\ldots,\hat{\bx}^d)$; \\
2. Find the $\beta_i\ (i=1,\cdots,d)$ satisfying  (\ref{eq.lower bound});\\
3. Compute $\hat{\bx}=\frac{1}{d}\sum_{k=1}^{d}\beta_k\hat{\bx}^k$;\\
4. Determine the optimal solution:\\
\quad (i) When $d$ is odd, the solution is which has the larger objective value between $-\frac{\hat{\bx}}{\|\hat{\bx}\|}$\\ \quad\quad~ and $\frac{\hat{\bx}}{\|\hat{\bx}\|}$;\\
\quad (ii) When $d$ is even, find $\bx'$ satisfying (\ref{eq.even solution}), $\frac{\bx'}{\|\bx'\|}$ is the solution.
\end{algorithm} 
\begin{lemma}\label{le.linkage}
    Suppose $\bx^1,\bx^2,\cdots,\bx^d\in\mH^n$, and $\xi_1,\xi_2,\cdots,\xi_d$ are i.i.d. symmetric Bernoulli random variables (taking $1$ and $-1$ with equal probability). For any super-symmetric tensor $\boldsymbol{\mathcal{F}}\in\mH^{n^d}$ with its associated multilinear form $\boldsymbol{F}$ and homogeneous polynomial $\boldsymbol{H}$, it holds that 
    $$
    E\left[\mathop{\Pi}
    \limits_{i=1}^{d}\xi_i\boldsymbol{H}\left(\sum_{k=1}^{d}\xi_k\bx^k\right)\right]=d!\boldsymbol{F}(\bx^1,\bx^2,\cdots,\bx^d).
    $$
\end{lemma}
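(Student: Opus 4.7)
The plan is to prove the identity by a standard polarization-type expansion, adapted to the commutative quaternion setting. Because $\boldsymbol{\mathcal{F}}$ is super-symmetric, the relation $\boldsymbol{H}(\bx)=\boldsymbol{F}(\bx,\ldots,\bx)$ stated just before the lemma lets me rewrite
$$
\boldsymbol{H}\!\left(\sum_{k=1}^{d}\xi_k\bx^k\right)
= \boldsymbol{F}\!\left(\sum_{k=1}^{d}\xi_k\bx^k,\ldots,\sum_{k=1}^{d}\xi_k\bx^k\right).
$$
Since each $\xi_k\in\{-1,1\}$ is real, it commutes with all commutative quaternion quantities, so expanding by multilinearity yields
$$
\boldsymbol{H}\!\left(\sum_{k=1}^{d}\xi_k\bx^k\right)
= \sum_{k_1,\ldots,k_d=1}^{d}\xi_{k_1}\cdots\xi_{k_d}\,\boldsymbol{F}(\bx^{k_1},\ldots,\bx^{k_d}).
$$

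Next I would multiply both sides by $\prod_{i=1}^{d}\xi_i$ and take expectation. Using $\xi_i^2=1$ together with independence, $E[\xi_{j_1}\cdots\xi_{j_m}]$ vanishes unless every index appears an even number of times. The combined product $\xi_1\cdots\xi_d\cdot\xi_{k_1}\cdots\xi_{k_d}$ contains $\xi_i$ with multiplicity $1+|\{m:k_m=i\}|$, so the expectation is nonzero (and equals $1$) exactly when each $i\in\{1,\ldots,d\}$ occurs in $(k_1,\ldots,k_d)$ an odd number of times. Since the total length is $d$, this forces $(k_1,\ldots,k_d)$ to be a permutation of $(1,\ldots,d)$.

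Finally, super-symmetry of $\boldsymbol{\mathcal{F}}$ (combined with the commutativity of $\mH$, which is what allows the entrywise products defining $\boldsymbol{F}$ to be reordered freely) gives $\boldsymbol{F}(\bx^{\sigma(1)},\ldots,\bx^{\sigma(d)})=\boldsymbol{F}(\bx^1,\ldots,\bx^d)$ for every $\sigma\in S_d$. Summing over the $d!$ surviving permutations yields the desired $d!\,\boldsymbol{F}(\bx^1,\ldots,\bx^d)$.

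The main subtlety to watch is the interaction between the random signs, the multilinear expansion, and the super-symmetric reordering: in a genuinely noncommutative quaternion setting the expansion would require careful ordering of products, but in the commutative quaternion domain together with the realness of the $\xi_k$ the argument reduces to the familiar real/complex polarization identity. Everything else is routine bookkeeping on Bernoulli moments and permutations.
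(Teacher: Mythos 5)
Your proposal is correct and follows essentially the same route as the paper's own proof: expand $\boldsymbol{H}\bigl(\sum_k \xi_k\bx^k\bigr)$ by multilinearity, use independence and $\xi_i^2=1$ to show that only tuples $(k_1,\ldots,k_d)$ forming a permutation of $(1,\ldots,d)$ survive the expectation, and invoke super-symmetry to collapse the $d!$ surviving terms into $d!\,\boldsymbol{F}(\bx^1,\ldots,\bx^d)$. Your parity-of-multiplicities criterion is just a slightly different packaging of the paper's observation that a non-permutation tuple leaves some $\xi_{k_0}$ appearing exactly once, so the term vanishes.
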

\begin{proof}
    Observe that 
    \begin{align*}
        &E\left[\mathop{\Pi}
    \limits_{i=1}^{d}\xi_i\boldsymbol{H}\left(\sum_{k=1}^{d}\xi_k\bx^k\right)\right]\\
    =&E\left[\mathop{\Pi}
    \limits_{i=1}^{d}\xi_i\sum_{1\le k_1, k_2,\cdots,k_d\le d}\boldsymbol{F}\left(\xi_{k_1}\bx^{k_1},\xi_{k_2}\bx^{k_2},\cdots,\xi_{k_d}\bx^{k_d}\right)\right]\\
    =&\sum_{1\le k_1, k_2,\cdots,k_d\le d}E\left[\left(\mathop{\Pi}
    \limits_{i=1}^{d}\xi_i\right)\left(\mathop{\Pi}
    \limits_{j=1}^{d}\xi_{k_j}\right)\boldsymbol{F}\left(\bx^{k_1},\bx^{k_2},\cdots,\bx^{k_d}\right)\right].
    \end{align*}
    Now, to simplify the right-hand formula, let us break it down into two cases. Specifically, if $(k_1,k_2,\cdots,k_d)\in\Pi(1,2,\cdots,d)$, i.e., a permutation of $\{1,2,\cdots,d\}$, then
    $$
    E\left[\left(\mathop{\Pi}
    \limits_{i=1}^{d}\xi_i\right)\left(\mathop{\Pi}
    \limits_{j=1}^{d}\xi_{k_j}\right)\right]=E\left[\mathop{\Pi}
    \limits_{i=1}^{d}\xi_i^2\right]=1;
    $$
    %otherwise, there exists $k_0(1\le k_0\le d)$ such that $k_0\neq k_j$ for all $j=1,2,\cdots,d$. In the latter case,
    otherwise, there exists $k_0\in\left[1,d\right]$ and $k_0\neq k_j$ for all $j=1,2,\cdots,d$ such that
    $$
    E\left[\left(\mathop{\Pi}
    \limits_{i=1}^{d}\xi_i\right)\left(\mathop{\Pi}
    \limits_{j=1}^{d}\xi_{k_j}\right)\right]=E\left[\xi_{k_0}\right]E\left[\left(\mathop{\Pi}
    \limits_{1\le i\le d,i\neq k_0}^{d}\xi_i\right)\left(\mathop{\Pi}
    \limits_{j=1}^{d}\xi_{k_j}\right)\right]=0.
    $$
    Since the number of different permutations of $\{1,2,\cdots,d\}$ is $d!$, the claimed relation holds by taking into account the super-symmetric property of $\boldsymbol{F}$.
\end{proof}

% Moreover, a technical lemma is presented below.
% \begin{lemma}\label{lm.1}
%     Suppose $\bx^1,\bx^2,\cdots,\bx^d\in\mathbf{S}^n$, and $\xi_1,\xi_2,\cdots,\xi_d$ are i.i.d. symmetric Bernoulli random variables, then 
%     $$
%     \Vert \bx_{\xi}\Vert_2\le d,
%     $$
%     and 
%     $$
%     \re\boldsymbol{H}(\bx_{\xi}) -\underline{v}(P)\Vert \bx_{\xi}\Vert_2^d\ge 0,
%     $$
%     where $\bx_{\xi}=\sum_{k=1}^{d}\xi_k\bx^k$, and $\underline{v}(P)$ is the optimal value of $min_{\mathbf{x}\in \boldsymbol{S}^n} \re \bH(\bx)$.
% \end{lemma}
% \begin{proof}
%     Due to the definition of $\bx_{\xi}$, it follows that 
%     $$
%     \Vert \bx_{\xi}\Vert_2=\Vert \sum_{k=1}^{d}\xi_k\bx^k\Vert_2\le \sum_{k=1}^{d}\Vert
%     \xi_k\Vert_2 \Vert \bx^k\Vert_2=d.
%     $$
%     \textcolor{red}{
%     \begin{align*}
%       \Vert \bx_{\xi}\Vert_2=\Vert \sum_{k=1}^{d}\xi_k\bx^k\Vert_2\le \sum_{k=1}^{d} \Vert \xi_k \bx^k\Vert_2=d  
%     \end{align*}
%     due to that $\xi_k$ only takes value of $\pm 1$ and $\bx^k \in  \boldsymbol{S}^n$.
%     }
%     Moreover, since $\frac{\bx_{\xi}}{\Vert \bx_{\xi}\Vert_2}$ is feasible for the problem $(P)$，it holds that 
%     $$
%     \re\boldsymbol{H}\left(\frac{\bx_{\xi}}{\Vert \bx_{\xi}\Vert}_2\right)-\underline{v}(P)\ge 0,
%     $$
%     which implies that inequality holds.
% \hfill $\Box$ \end{proof}
% \textcolor{red}{This lemma is no longer needed.}

We are now ready to prove the following main theorem of the paper.

\begin{theorem}\label{Thm:poly-approx}
    Let $\tau(P)=d^{-d}d!\left(\gamma\cdot\frac{\ln n}{n}\right)^{\frac{d-2}{2}}$, denote $v^*(P)$ and $\underline{v}(P)$ as the optimal values for problems (P) and $min_{\mathbf{x}\in \boldsymbol{B}^n} \re \bH(\bx)$, respectively. Then we have:
    \begin{enumerate}
        \item [(1)] If d is odd, then the randomized Algorithm \ref{Alg: Randomized Alg 2} solves $\left(P\right)$ with an approximation ratio of $d^{-d}d!\left(\gamma\cdot\frac{\ln n}{n}\right)^{\frac{d-2}{2}}$. That is, for any given $\epsilon > 0$ and $\gamma \in \left(0, \frac{n}{\ln n}\right)$, a feasible solution $\bx\in\mH^n$ can be generated in polynomial time with a probability at least $1-\epsilon$, such that
        $$
        \re \boldsymbol{H}(\bx)\ge \tau(P)v^*(P).
        $$
        \item [(2)] If $d$ is even, then for any given $\epsilon > 0$ and $\gamma \in \left(0, \frac{n}{\ln n}\right)$, the randomized Algorithm \ref{Alg: Randomized Alg 2} finds a feasible solution $\bx\in\mH^n$ for problem $(P)$ in polynomial time with a probability at least $1-\epsilon$, such that
        $$\re\boldsymbol{H}\left(\bx\right)-\underline{v}(P)\ge2\tau(P)v^*(P)
        $$
    \end{enumerate}
\end{theorem}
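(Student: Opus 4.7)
The plan is to apply Algorithm \ref{Alg: Randomized Alg 1} to the multilinear relaxation, use the sampling identity in Lemma \ref{le.linkage} to pass back to the homogeneous polynomial, and then handle the parity of $d$ separately when normalizing onto the unit sphere. Feed the super-symmetric tensor associated with $\bH$ to Algorithm \ref{Alg: Randomized Alg 1}: since $(\bx,\ldots,\bx)$ is feasible for $(F)$ whenever $\bx$ is feasible for $(P)$, we have $v^*(F)\geq v^*(P)$, and Theorem \ref{tm.approximation theorem} produces, with probability at least $1-\epsilon$, a tuple $(\hat{\bx}^1,\ldots,\hat{\bx}^d)$ with $\re \bF(\hat{\bx}^1,\ldots,\hat{\bx}^d)\geq \gamma^{(d-2)/2}(\ln n/n)^{(d-2)/2}v^*(P)$. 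Averaging the identity of Lemma \ref{le.linkage} over $\xi\in\{\pm 1\}^d$ and keeping the best sample yields signs $\beta\in\{\pm 1\}^d$ with
$$\prod_{i=1}^d\beta_i\cdot\re \bH\Bigl(\sum_{k=1}^d\beta_k\hat{\bx}^k\Bigr)\geq d!\,\re \bF(\hat{\bx}^1,\ldots,\hat{\bx}^d),$$
which is precisely the inequality (\ref{eq.lower bound}) sought by Step 2 of Algorithm \ref{Alg: Randomized Alg 2}. Writing $\bx'=\sum_k\beta_k\hat{\bx}^k$, the triangle inequality gives $\|\bx'\|\leq d$, so homogeneity costs at most a factor of $d^{-d}$ when I renormalize onto $\mathbf{S}^n$.

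For odd $d$ the argument is short: $\bH(-\bx)=-\bH(\bx)$ implies that $\re \bH(\hat{\bx}/\|\hat{\bx}\|)$ and $\re \bH(-\hat{\bx}/\|\hat{\bx}\|)$ are negatives of each other, so the larger-of-two rule (Step 4(i)) yields the output $\bx$ satisfying $\re \bH(\bx)=|\re \bH(\hat{\bx}/\|\hat{\bx}\|)|$. Since $\|\hat{\bx}\|\leq 1$, homogeneity gives
$$|\re \bH(\hat{\bx}/\|\hat{\bx}\|)|=\|\hat{\bx}\|^{-d}|\re \bH(\hat{\bx})|\geq|\re \bH(\hat{\bx})|=d^{-d}|\re \bH(\bx')|\geq d^{-d}\,d!\,\re \bF\geq \tau(P)v^*(P),$$
where the middle equality uses $\hat{\bx}=\bx'/d$ together with $\bH(\alpha\bx)=\alpha^d\bH(\bx)$.

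The hard part is the even case, where $\bH(-\bx)=\bH(\bx)$ voids the sign-flip trick and, worse, dividing a negative value $\re \bH(\bx')$ by a small $\|\bx'\|^d$ amplifies the negativity rather than shrinking it. I would split the expectation in Lemma \ref{le.linkage} according to the parity of $\prod_i\beta_i$; since each parity class contains $2^{d-1}$ sign vectors, one obtains
$$\max_{\prod\beta_i=1}\re \bH\Bigl(\sum_k\beta_k\hat{\bx}^k\Bigr)-\min_{\prod\beta_i=-1}\re \bH\Bigl(\sum_k\beta_k\hat{\bx}^k\Bigr)\geq 2\,d!\,\re \bF.$$
Writing $\bu,\bv\in\mathbf{S}^n$ for the unit-sphere normalizations of the two optimizers---the pair encoded by (\ref{eq.even solution})---a short case analysis on the signs of $\re \bH(\bx')$ and $\re \bH(\bx'')$ closes the proof by combining the factor $\|\cdot\|^{-d}\geq d^{-d}$ with $\re \bH(\bv)\geq \underline{v}(P)$ and $\underline{v}(P)\leq 0$ (from $\bH(0)=0$). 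In the generic regime $\re \bH(\bx')\geq 0\geq \re \bH(\bx'')$ the factor preserves the gap, giving $\re \bH(\bu)-\re \bH(\bv)\geq 2\tau(P)v^*(P)$, whence $\re \bH(\bu)-\underline{v}(P)\geq 2\tau(P)v^*(P)$; when both numerators are non-negative, $\re \bH(\bx')\geq 2\,d!\,\re \bF$ alone forces $\re \bH(\bu)\geq 2\tau(P)v^*(P)$; and when both are non-positive, $\re \bH(\bv)\leq -2\tau(P)v^*(P)$ forces $\underline{v}(P)\leq -2\tau(P)v^*(P)$, which combined with any $\bx\in\mathbf{S}^n$ achieving $\re \bH(\bx)\geq 0$ closes the gap---with the residual case $v^*(P)<0$ making the stated inequality vacuous.
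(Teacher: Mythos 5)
Your overall route is the same as the paper's: relax $(P)$ to $(F)$, invoke Theorem \ref{tm.approximation theorem}, transfer back via Lemma \ref{le.linkage}, and split on the parity of $d$; your odd-degree argument is essentially identical to the paper's. In the even case your max-minus-min inequality over the two parity classes is correct and is equivalent in content to the paper's derivation (the paper inserts $\underline{v}(P)$ using $E[\prod_i \xi_i]=0$, drops the $\prod_i\xi_i=-1$ branch because $\hat{\bx}_\xi/d$ lies in the ball, and obtains $\re \bH(\bx'/d)-\underline{v}(P)\ge 2d^{-d}d!\,\re \bF$ for some signs with $\prod_i\beta_i=1$). You are also right that the final normalization onto the sphere is delicate when $\re \bH(\bx'/d)<0$; this is precisely the step the paper dispatches in one line.

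The gap is in your third case (both $\re \bH(\bx')\le 0$ and $\re \bH(\bx'')\le 0$). There you conclude $\underline{v}(P)\le -2\tau(P)v^*(P)$ and then appeal to ``any $\bx\in\mathbf{S}^n$ achieving $\re \bH(\bx)\ge 0$.'' Such a point exists whenever $v^*(P)\ge 0$, but the theorem is about what Algorithm \ref{Alg: Randomized Alg 2} outputs: Step 4(ii) returns $\bx'/\|\bx'\|$, whose value $\|\bx'\|^{-d}\re \bH(\bx')\le 0$ is only guaranteed to exceed $\underline{v}(P)$, not $\underline{v}(P)+2\tau(P)v^*(P)$, and nothing in the algorithm produces a certified nonnegative point. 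Nor do you give a polynomial-time way to find one: for general even-degree forms, locating (or even deciding the existence of) a nonnegative point on the sphere is itself a nontrivial problem, so this cannot be waved in as a fallback. To close the even case you would need either to show that this sign configuration cannot occur for the $\bx'$ selected in Step 2, or to augment the algorithm with an explicit candidate whose value is provably at least $\underline{v}(P)+2\tau(P)v^*(P)$ in that regime (switching to a relative-ratio guarantee of the form $\re\bH(\bx)-\underline{v}(P)\ge \tau\,(v^*(P)-\underline{v}(P))$ is one standard way out). As written, cases one and two of your analysis are fine, but the proof of part (2) is not complete.
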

\begin{proof}
        By relaxing problem $(P)$ to problem $(F)$, Algorithm \ref{Alg: Randomized Alg 1} can be used to find a solution $(\hat{\bx}^1,\hat{\bx}^2,\cdots,\hat{\bx}^d)$ with an approximation ratio of $\left(\gamma\cdot\frac{\ln n}{n}\right)^{\frac{d-2}{2}}$, i.e., 
        \begin{equation}\label{eq.ratio F}
            \re \boldsymbol{F}(\hat{\bx}^1,\hat{\bx}^2,\cdots,\hat{\bx}^d)\ge \left(\gamma\cdot\frac{\ln n}{n}\right)^{\frac{d-2}{2}}v^*(F).
        \end{equation}
        To proceed, we discuss the approximation ratio in two cases: when $d$ is odd and when $d$ is even.
        \begin{enumerate}
        \item [(1)]
          When $d$ is odd, it is obvious that $\re\boldsymbol{H}(-\bx)=-\re\boldsymbol{H}(\bx)$. From Lemma \ref{le.linkage}, we know there must exist values $\beta_i = \pm 1 (i=1,\cdots,d)$ such that 
        \begin{equation}\label{eq.lower bound}
            \begin{split}
                \re\mathop{\Pi}\limits_{i=1}^{d}\beta_i\boldsymbol{H}\left(\frac{1}{d}\sum_{k=1}^{d}\beta_k\hat{\bx}^k\right)&\ge\re d^{-d}d!\boldsymbol{F}(\hat{\bx}^1,\hat{\bx}^2,\cdots,\hat{\bx}^d)\\
                &\ge d^{-d}d!\left(\gamma\cdot\frac{\ln n}{n}\right)^{\frac{d-2}{2}}v^*(F)\\
                &= \tau(P)v^*(F) \\
                &\ge\tau(P)v^*(P),
            \end{split}
        \end{equation}
        where the second inequality is due to (\ref{eq.ratio F}). Denote the random vector $\hat{\bx}=\frac{1}{d}\sum_{k=1}^{d}\beta_k\hat{\bx}^k$, it is easy to see that
        \begin{equation}\label{eq.hat_x bound}
            \Vert \hat{\bx}\Vert=\Vert \frac{1}{d}\sum_{k=1}^{d}\beta_k\hat{\bx}^k\Vert\le\frac{1}{d}\sum_{k=1}^d\Vert\beta_k\hat{\bx}^k\Vert = 1.
        \end{equation}
        Therefore, we have
            $$
            \begin{aligned}
                \max\left\{\re\boldsymbol{H}\left(\frac{-\hat{\bx}}{\Vert \hat{\bx}\Vert}\right),\re\boldsymbol{H}\left(\frac{\hat{\bx}}{\Vert \hat{\bx}\Vert}\right)\right\}& = \Vert \hat{\bx}\Vert^{-d}\left| \re\boldsymbol{H}\left(\hat{\bx}\right)\right|\\
                &\ge\re\mathop{\Pi}\limits_{i=1}^{d}\beta_i\boldsymbol{H}\left(\frac{1}{d}\sum_{k=1}^{d}\beta_k\hat{\bx}^k\right)\\
                &\ge\tau(P)v^*(P)
            \end{aligned}
            $$
            where the second and last inequalities are derived from \eqref{eq.hat_x bound} and \eqref{eq.lower bound}, respectively.
            \item [(2)] When $d$ is even, let $\hat{\bx}_{\xi}= \sum_{k=1}^{d}\xi_k\hat{\bx}^k$ where $\xi_i$ satisfies the condition we presented in Lemma 5.1. From Lemma \ref{le.linkage}, we have
            \begin{equation}
                \begin{split}
                    &d^{-d} d!\re \boldsymbol{F}(\hat{\bx}^1,\hat{\bx}^2,\cdots,\hat{\bx}^d)\\
                    =& E\left[\mathop{\Pi}
                    \limits_{i=1}^{d}\xi_i\re\boldsymbol{H}\left(\frac{1}{d}\sum_{k=1}^{d}\xi_k\hat{\bx}^k\right)\right]\\
                    =&E\left[\mathop{\Pi}
                    \limits_{i=1}^{d}\xi_i\re\boldsymbol{H}\left(\frac{1}{d}\hat{\bx}_{\xi}\right)\right]\\
                    =&E\left[\mathop{\Pi}
                    \limits_{i=1}^{d}\xi_i\left(\re\boldsymbol{H}\left(\frac{1}{d}\hat{\bx}_{\xi}\right)-\underline{v}(P)\right)\right],
                \end{split}\nonumber
            \end{equation}
            where the last equality holds because of the identity $E\left(\mathop{\Pi}
            \limits_{i=1}^{d}\xi_i\right)=0$. Applying Tower's rule implies
            \begin{equation}
                \begin{split}
                    &E\left[\mathop{\Pi}
                    \limits_{i=1}^{d}\xi_i\left(\re\boldsymbol{H}\left(\frac{1}{d}\hat{\bx}_{\xi}\right)-\underline{v}(P)\right)\right]\\
                    =&E\left[\mathop{\Pi}
                    \limits_{i=1}^{d}\xi_i\left(\re\boldsymbol{H}\left(\frac{1}{d}\hat{\bx}_{\xi}\right)-\underline{v}(P)\right)|\mathop{\Pi}
                    \limits_{i=1}^{d}\xi_i=1\right]P\left(\mathop{\Pi}
                    \limits_{i=1}^{d}\xi_i=1\right)\\
                    &+E\left[\mathop{\Pi}
                    \limits_{i=1}^{d}\xi_i\left(\re\boldsymbol{H}\left(\frac{1}{d}\hat{\bx}_{\xi}\right)-\underline{v}(P)\right)|\mathop{\Pi}
                    \limits_{i=1}^{d}\xi_i=-1\right]P\left(\mathop{\Pi}
                    \limits_{i=1}^{d}\xi_i=-1\right).
                \end{split}\nonumber
            \end{equation}
            Note that $\hat{\bx}_{\xi}/d \in \boldsymbol{B}^n$, which implies that the term $\mathop{\Pi}\limits_{i=1}^{d} \xi_i\left(\re\boldsymbol{H}\left(\hat{\bx}_{\xi}/d\right)-\underline{v}(P)\right)$ is nonpositive under the condition $ \mathop{\Pi}\limits_{i=1}^{d}\xi_i=-1$, and thus it can be dropped. Consequently, we have
            \begin{equation}
                \begin{split}
                    &d^{-d} d!\re \boldsymbol{F}(\hat{\bx}^1,\hat{\bx}^2,\cdots,\hat{\bx}^d)\\
                    =&E\left[\mathop{\Pi}
                    \limits_{i=1}^{d}\xi_i\left(\re\boldsymbol{H}\left(\frac{1}{d}\hat{\bx}_{\xi}\right)-\underline{v}(P)\right)\right]\\
                    \le &E\left[\mathop{\Pi}
                    \limits_{i=1}^{d}\xi_i\left(\re\boldsymbol{H}\left(\frac{1}{d}\hat{\bx}_{\xi}\right)-\underline{v}(P)\right)|\mathop{\Pi}
                    \limits_{i=1}^{d}\xi_i=1\right]P\left(\mathop{\Pi}
                    \limits_{i=1}^{d}\xi_i=1\right)\\
                    =&\frac{1}{2}E\left[\re\boldsymbol{H}\left(\frac{1}{d}\hat{\bx}_{\xi}\right)-\underline{v}(P)|\mathop{\Pi}
                    \limits_{i=1}^{d}\xi_i= 1\right].
                \end{split}\nonumber
            \end{equation}
            Therefore, we know there must exist values $\beta_i = \pm 1 (i=1,\cdots,d)$, $\mathop{\Pi}
                    \limits_{i=1}^{d} \beta_i = 1$ such that $\bx' = \sum_{k=1}^d \beta_k \hat \bx^k$ which satisfies
            \begin{equation}\label{eq.even solution}
                \begin{split}
                    \re\boldsymbol{H}\left(\frac{1}{d}\bx'\right)-\underline{v}(P)
                    &\ge E\left[\re\boldsymbol{H}\left(\hat{\bx}_{\xi}\right)-\underline{v}(P) |\mathop{\Pi}
                    \limits_{i=1}^{d}\xi_i= 1\right]\\
                    &\ge 2 d^{-d} d!\re \boldsymbol{F}(\hat{\bx}^1,\hat{\bx}^2,\cdots,\hat{\bx}^d)\\
                    &\ge2 d^{-d} d!\left(\gamma\cdot\frac{\ln n}{n}\right)^{\frac{d-2}{2}}v^*(P).
                \end{split}
            \end{equation}
            Since $\|\bx'\| \le d$, combining the above inequalities follows 
            $$
            \begin{aligned}
                \re\boldsymbol{H}\left(\frac{\bx'}{\Vert \bx'\Vert}\right)-\underline{v}(P)
            &\ge2d!d^{-d}\left(\gamma\cdot\frac{\ln n}{n}\right)^{\frac{d-2}{2}}v^*(P)\\
            &\ge 2\tau(P)v^*(P).
            \end{aligned}
            $$
            % where the second inequality obtained from Lemma \ref{lm.1}, and the proof is completed.
        \end{enumerate}
        where the second inequality is obtained from the definition of $\tau(P)$, and the proof is completed.
\end{proof} 

% Building on these results, we present Algorithm 2 to solve the original problem $(P)$, utilizing Algorithm 1 as a subroutine. Its theoretical performance is guaranteed by Theorem 
% \ref{Thm:poly-approx}.
    \begin{remark}
        Similar to Algorithm \ref{Alg: Randomized Alg 1}, Algorithm \ref{Alg: Randomized Alg 2} attains the same approximation ratio as its real counterpart \cite{he2014probability}. However, a large number of independent samples is required because it uses Algorithm \ref{Alg: Randomized Alg 1} as a subroutine.
    \end{remark}

\section{Numerical experiments}\label{sec.numerical}
In this section, we will verify the rationality of our theoretical analysis through numerical experiments. We construct a special problem and give an upper bound for obtaining the approximate ratio . All the numerical computations are done on a MacBook Pro 13-inch (2022) with Apple M2 and 8 GB of RAM. The supporting software is MATLAB R2023b.

To demonstrate the performance of Algorithm \ref{Alg: Randomized Alg 1}, we test it on problem (F) with fixed parameters ($d=3$ and $n_1=n_2=n_3=n$). However, because the problem is NP-hard, the true optimal value cannot be computed for comparison. Therefore, we construct a special instance of the problem where an upper bound for the optimal value can be derived. This upper bound then serves as a reference for evaluating our algorithm's performance. We summarize it as the following proposition and put the detailed proof in the Appendix B.
\begin{proposition}\label{theorem.upper bound}
    Suppose $d = 3$ and denote $\mathbf{\mathcal{F}}=\mathcal{F}_0 + \mathcal{F}_1\textbf{i} +\mathcal{F}_2\textbf{j} + \mathcal{F}_3\textbf{k}\in \mathbb{H}^{n_1\times n_2\times n_3}$. If we let $\mathcal{F}_0 = \textbf{1}_{n_1\times n_2\times n_3}$ and $\mathcal{F}_1 = \mathcal{F}_2 = \mathcal{F}_3 = \textbf{0}_{n_1\times n_2\times n_3}$, then $v_{\text{upper}} = 2\sqrt{n_1n_2n_3}$ is an upper bound for problem (F), i.e. 
    $$
        v^*(F)\le v_{upper},
    $$
    where $\textbf{1}_{n_1\times n_2\times n_3},\textbf{0}_{n_1\times n_2\times n_3}\in \mathbb{R}^{n_1\times n_2\times n_3}$ are all-ones tensors and all-zeros tensors respectively.
\end{proposition}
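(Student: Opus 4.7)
The plan is to exploit the highly structured choice of $\boldsymbol{\mathcal{F}}$ to reduce the multilinear form to a single triple product of commutative quaternions, and then to control that product via the idempotent decomposition of $\mathbb{H}$. First, because every entry of $\boldsymbol{\mathcal{F}}$ equals the real scalar $1$ and multiplication in $\mathbb{H}$ is commutative, the triple sum factors:
$$\boldsymbol{F}(\bx^1,\bx^2,\bx^3)=\sum_{i_1,i_2,i_3}\bx^1_{i_1}\bx^2_{i_2}\bx^3_{i_3}=\boldsymbol{s}^1\boldsymbol{s}^2\boldsymbol{s}^3,\qquad\boldsymbol{s}^k:=\sum_{i=1}^{n_k}\bx^k_i\in\mathbb{H}.$$
Since $\re(\bq)\le|\bq|$ for every $\bq\in\mathbb{H}$, it then suffices to show $|\boldsymbol{s}^1\boldsymbol{s}^2\boldsymbol{s}^3|\le 2\sqrt{n_1n_2n_3}$ under the constraints $\|\bx^k\|=1$ for $k=1,2,3$.

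The main obstacle is that $\mathbb{H}$ is \emph{not} a normed algebra: the identity $(1+\bj)(1-\bj)=0$ shows the multiplicative inequality $|\bp\bq|\le|\bp|\,|\bq|$ can fail, so one cannot simply chain $|\boldsymbol{s}^1\boldsymbol{s}^2\boldsymbol{s}^3|\le|\boldsymbol{s}^1||\boldsymbol{s}^2||\boldsymbol{s}^3|$ with Cauchy--Schwarz. To bypass this I would introduce the idempotents $e_1=(1+\bj)/2$ and $e_2=(1-\bj)/2$, which satisfy $e_1^2=e_1$, $e_2^2=e_2$, $e_1e_2=0$. A short calculation shows that each $\bq\in\mathbb{H}$ decomposes uniquely as $\bq=z_1(\bq)\,e_1+z_2(\bq)\,e_2$ with $z_1(\bq),z_2(\bq)$ in the complex plane generated by $\bi$, multiplication is componentwise ($\bp\bq=z_1(\bp)z_1(\bq)\,e_1+z_2(\bp)z_2(\bq)\,e_2$), and the norm identity $|\bq|^2=\tfrac12(|z_1(\bq)|^2+|z_2(\bq)|^2)$ holds.

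Applying the decomposition to each $\bx^k$ and defining $\alpha^k:=\sum_i|z_1(\bx^k_i)|^2$ and $\beta^k:=\sum_i|z_2(\bx^k_i)|^2$, the unit-sphere constraint $\|\bx^k\|=1$ becomes $\alpha^k+\beta^k=2$. Linearity of $z_j$ gives $z_j(\boldsymbol{s}^k)=\sum_i z_j(\bx^k_i)$, and Cauchy--Schwarz on $n_k$ complex coordinates yields $|z_1(\boldsymbol{s}^k)|^2\le n_k\alpha^k$ and $|z_2(\boldsymbol{s}^k)|^2\le n_k\beta^k$. Using the componentwise multiplicativity,
$$|\boldsymbol{s}^1\boldsymbol{s}^2\boldsymbol{s}^3|^2=\tfrac12\bigl(|z_1(\boldsymbol{s}^1)|^2|z_1(\boldsymbol{s}^2)|^2|z_1(\boldsymbol{s}^3)|^2+|z_2(\boldsymbol{s}^1)|^2|z_2(\boldsymbol{s}^2)|^2|z_2(\boldsymbol{s}^3)|^2\bigr)\le\tfrac{n_1n_2n_3}{2}\bigl(\alpha^1\alpha^2\alpha^3+\beta^1\beta^2\beta^3\bigr).$$
A quick elementary maximization over the box $\alpha^k+\beta^k=2$, $\alpha^k,\beta^k\ge0$ shows $\alpha^1\alpha^2\alpha^3+\beta^1\beta^2\beta^3\le 8$, attained at the corner $(\alpha^k,\beta^k)=(2,0)$ for all $k$; substituting yields $|\boldsymbol{s}^1\boldsymbol{s}^2\boldsymbol{s}^3|\le 2\sqrt{n_1n_2n_3}$, completing the proof.
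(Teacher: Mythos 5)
Your proof is correct, and it takes a genuinely different route from the paper's. The paper writes $\re\boldsymbol{F}$ out in the real components $x_i,y_i,z_i$ of the three quaternion vectors, obtaining a sum of sixteen real trilinear forms in the all-ones tensor; it then invokes an orthogonal-decomposition result of Kolda (Lemma B.1 and Corollary B.1 in the appendix) to bound each term by $\sqrt{n_1n_2n_3}\,\|x_a\|\|y_b\|\|z_c\|$, and finally asserts that the resulting norm-product maximization over the three unit-sphere constraints has optimal value $2\sqrt{n_1n_2n_3}$ (a step the paper leaves as ``easily obtained''). You instead exploit the factorization $\boldsymbol{F}(\bx^1,\bx^2,\bx^3)=\boldsymbol{s}^1\boldsymbol{s}^2\boldsymbol{s}^3$ available because every entry of $\boldsymbol{\mathcal{F}}$ is $1$ and the algebra is commutative, and then control the product through the idempotent (Peirce) splitting $e_{1,2}=(1\pm\bj)/2$, under which $\mathbb{H}$ multiplies componentwise as two copies of $\mathbb{C}$ and the norm satisfies $|\bq|^2=\tfrac12(|z_1(\bq)|^2+|z_2(\bq)|^2)$; Cauchy--Schwarz in each complex component plus the corner maximization of $\alpha^1\alpha^2\alpha^3+\beta^1\beta^2\beta^3\le 8$ (valid since the objective is linear in each $(\alpha^k,\beta^k)$, and mixed corners vanish) then gives $|\boldsymbol{s}^1\boldsymbol{s}^2\boldsymbol{s}^3|\le 2\sqrt{n_1n_2n_3}$, and $\re(\bq)\le|\bq|$ finishes. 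Your argument is self-contained: it avoids the external Kolda lemma, it explicitly addresses the failure of sub-multiplicativity of the commutative quaternion norm (which the paper sidesteps by passing to real trilinear forms), and it supplies a complete justification where the paper's final optimization step is only asserted; the paper's route, on the other hand, is closer in spirit to a generic reduction to real tensor bounds and does not rely on the special factorization of the all-ones tensor across all three modes beyond its rank-one orthogonal structure.
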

\begin{remark}
    In the real domain, special instances of problem (F) with known optimal values can be constructed using the property $\|\mathrm{vec}(x\otimes y)\| = \|x\|\,\|y\|$ for real vectors $x \in \mathbb{R}^l$ and $y \in \mathbb{R}^m$ (see Section 3.5 of \cite{hu2025spectral}). The notation $\mathrm{vec}(\cdot)$ denotes vectorization, which stacks the entries of a matrix into a vector row by row. However, this property does not hold for commutative quaternion vectors. This limitation restricts us to constructing a special instance for which an upper bound is known, rather than the exact optimal value.
\end{remark}
For fixed number of trials, We report the average and worst-case ratio relative to the upper bound over 20 runs for $n=2, 3, 4, 5, 6, 7$. The corresponding numerical results are shown in Table \ref{tab:approximation_ratios_2_3}, Table \ref{tab:approximation_ratios_4_5}, Table \ref{tab:approximation_ratios_6_7}.
\begin{table}[ht]
    \centering
    \caption{Approximation ratios (i.e., objective value / theoretical upper bound) of Algorithm~\ref{Alg: Randomized Alg 1} for $n=2$ and $n=3$ over 20 runs. The average and worst-case ratios are reported for various iteration numbers.}
    \label{tab:approximation_ratios_2_3}
    \begin{tabular}{c ccc c}
        \toprule
         & \multicolumn{2}{c}{n=2} & \multicolumn{2}{c}{n=3}  \\
        \cmidrule(lr){2-3} \cmidrule(lr){4-5} 
         Number of trials & Average ratio & Worst ratio & Average ratio & Worst ratio \\
        \midrule
        1 & 0.4124 & 0.1918 & 0.3197 & 0.1660 \\
        5 & 0.5494 & 0.4278 & 0.4348 & 0.3740\\
        10 & 0.5729 & 0.4927 & 0.5244 & 0.4323 \\
        20 & 0.6042 & 0.5357 & 0.5255 & 0.4424  \\
        50 & 0.6274 & 0.5889 & 0.5590 & 0.4971 \\
        100 & 0.6547 & 0.6060 & 0.5952  & 0.5303 \\
        500 & 0.6737 & 0.6530 & 0.6148  & 0.5690 \\
        1000 & 0.6791 & 0.6615 & 0.6350  & 0.6075 \\
        10000 & 0.6941 & 0.6867 & 0.6620 & 0.6437\\
        \bottomrule
    \end{tabular}
\end{table}
\begin{table}[ht]
    \centering
    \caption{Approximation ratios (i.e., objective value / theoretical upper bound) of Algorithm~\ref{Alg: Randomized Alg 1} for $n=4$ and $n=5$ over 20 runs. The average and worst-case ratios are reported for various iteration numbers.}
    \label{tab:approximation_ratios_4_5}
    \begin{tabular}{c ccc c}
        \toprule
         & \multicolumn{2}{c}{n=4} & \multicolumn{2}{c}{n=5}  \\
        \cmidrule(lr){2-3} \cmidrule(lr){4-5} 
         Number of trials & Average ratio & Worst ratio & Average ratio & Worst ratio \\
        \midrule
        1 & 0.2521 & 0.1254 & 0.2748 & 0.1585 \\
        5 & 0.4275 & 0.3005 & 0.3694 & 0.2570\\
        10 & 0.4382 & 0.3790 & 0.3973 & 0.3214 \\
        20 & 0.4595 & 0.3828 & 0.4365 & 0.3721  \\
        50 & 0.5190 & 0.4547 & 0.4787 & 0.3826 \\
        100 & 0.5291 & 0.4568 & 0.4778  & 0.4326 \\
        500 & 0.5625 & 0.5293 & 0.5351  & 0.4751 \\
        1000 & 0.5843 & 0.5389 & 0.5474  & 0.4992 \\
        10000 & 0.6195 & 0.5856 & 0.5856 & 0.5477\\
        \bottomrule
    \end{tabular}
\end{table}
\begin{table}[ht]
    \centering
    \caption{Approximation ratios (i.e., objective value / theoretical upper bound) of Algorithm~\ref{Alg: Randomized Alg 1} for $n=6$ and $n=7$ over 20 runs. The average and worst-case ratios are reported for various iteration numbers.}
    \label{tab:approximation_ratios_6_7}
    \begin{tabular}{c ccc c}
        \toprule
         & \multicolumn{2}{c}{n=6} & \multicolumn{2}{c}{n=7}  \\
        \cmidrule(lr){2-3} \cmidrule(lr){4-5} 
         Number of trials & Average ratio & Worst ratio & Average ratio & Worst ratio \\
        \midrule
        1 & 0.2216 & 0.1368 & 0.2004 & 0.1240 \\
        5 & 0.3484 & 0.2032 & 0.3007 & 0.2222\\
        10 & 0.3608 & 0.2614 & 0.3357 & 0.2651 \\
        20 & 0.4041 & 0.3284 & 0.3798 & 0.2973  \\
        50 & 0.4293 & 0.3470 & 0.3998 & 0.3445 \\
        100 & 0.4494 & 0.3821 & 0.4296  & 0.3689 \\
        500 & 0.4914 & 0.4573 & 0.4684  & 0.4145 \\
        1000 & 0.5077 & 0.4799 & 0.4765  & 0.4370 \\
        10000 & 0.5585 & 0.5233 & 0.5252 & 0.4982\\
        \bottomrule
    \end{tabular}
\end{table}
\section{Conclusions}
% In this paper, we studied a special commutative quaternion polynomial optimization. Our thought is motivated by the special example: best rank-one tensor approximation. With the preparations of the basic definitions and probability inequalities, an approximation algorithm for multilinear forms was proposed, and the approximation ratio was provided. At the meanwhile, we demonstrated a bridge between multilinear forms and homogeneous polynomials.   Based on the above results, we developed a randomized algorithm for homogeneous polynomials and proved its effectiveness by giving the approximation ratio.
In this paper, we study commutative quaternion polynomial optimization with spherical constraint, which includes the best rank-one tensor approximation over commutative quaternion domain as a special case. To the best of our knowledge, this is the first attempt to study this kind of problem. The probability inequality with uniform commutative quaternion random variables over unit sphere is established. Based on this probability inequality, we propose polynomial-time randomized algorithms for the homogeneous polynomial problem and its multilinear form relaxation, and prove that they have the worst case approximation ratio guarantee.

% In the sequential, we proposed randomized Algorithm \ref{Alg: Randomized Alg 1} to find an effective solution in polynomial-time with the approximation ratio provided. The algorithm repeatedly samples from commutative
% quaternion unit spheres and solve the polynomial-time solvable subproblems. The solution to the subproblems with the largest objective value is output after a certain number of repetitions. In addition, we generalized a bridge between the multilinear forms and the homogeneous polynomials in real domain and complex domain to the commutative quaternion domain. With these results, we developed the randomized Algorithm \ref{Alg: Randomized Alg 2} for solving the homogeneous polynomials. The algorithm outputs a weighted average solution based on the solution of the relaxed mutilinear form optimization, the approximation ratio was also given which is slightly different depending on the dimensionality of the problem.}

% \begin{acknowledgements}
% \thanks{This research is sponsored by Natural Science Foundation of Shanghai (No. 23ZR1445900). Bo Jiang was supported by the National Natural Science Foundation of China [Grants 72394360, 72394364, and 72171141] and the Program for Innovative Research Team of Shanghai University of Finance and Economics. The order of the authors is alphabetical.}
% \end{acknowledgements}

\appendix
\section*{Appendix A: Missing Proofs in Section \ref{sec.preliminary}}\label{app.A}
We present proofs of proposition \ref{prop.equi with F} and proposition \ref{prop.equi with P} which clarifies the optimization problem (\ref{best rank-one approximation}) is equivalent to model $(F)$ and model $(P)$. For the convenience of readers, we restate these propositions here.
\setcounter{proposition}{0}
\renewcommand\theproposition{A.1}
\begin{proposition}[Equivalence with $(F)$]
     The optimization problem (\ref{best rank-one approximation}) is equivalent to the following problem:
    \begin{equation}\label{tight1}
        \max_{\Vert \bx^k\Vert=1,k=1,\cdots,d} \re\boldsymbol{F}(\bx^1,\cdots,\bx^d). 
    \end{equation}
\end{proposition}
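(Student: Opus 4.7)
The plan is to rewrite the squared distance in \eqref{best rank-one approximation} via the tensor inner product $\bullet$, identify the cross term with the multilinear form $\boldsymbol{F}$, and then collapse the radial degree of freedom in $\boldsymbol{x}^1\otimes\cdots\otimes\boldsymbol{x}^d$ to land on the unit-sphere formulation \eqref{tight1}.

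Setting $\boldsymbol{Y} = \boldsymbol{x}^1\otimes\cdots\otimes\boldsymbol{x}^d$, expand
$$
\tfrac{1}{2}\|\boldsymbol{Y} - \boldsymbol{\mathcal{F}}\|^2 = \tfrac{1}{2}\|\boldsymbol{Y}\|^2 - \boldsymbol{Y}\bullet\boldsymbol{\mathcal{F}} + \tfrac{1}{2}\|\boldsymbol{\mathcal{F}}\|^2,
$$
so only the first two terms are relevant. Using that $\bp\bullet\bq = \re(\overline{\bp}\,\bq)$ componentwise, and that the principal conjugation on $\mH$ is a ring automorphism (so $\overline{\bp\bq} = \overline{\bp}\,\overline{\bq}$), the cross term becomes
$$
\boldsymbol{Y}\bullet\boldsymbol{\mathcal{F}} = \re\!\sum_{i_1,\ldots,i_d}\overline{\boldsymbol{x}^1_{i_1}}\cdots\overline{\boldsymbol{x}^d_{i_d}}\,\boldsymbol{\mathcal{F}}_{i_1\cdots i_d} = \re\boldsymbol{F}\bigl(\overline{\boldsymbol{x}^1},\ldots,\overline{\boldsymbol{x}^d}\bigr).
$$
Because entrywise conjugation is a real orthogonal map on $\mH^{n_k}$, the substitution $\boldsymbol{y}^k := \overline{\boldsymbol{x}^k}$ is a norm-preserving bijection, so no generality is lost in working with $\re\boldsymbol{F}(\boldsymbol{y}^1,\ldots,\boldsymbol{y}^d)$ over the same constraint set.

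Now exploit the scaling freedom of the outer product. Parameterize $\boldsymbol{y}^k = \lambda_k\boldsymbol{z}^k$ with $\lambda_k\in\mR$ and $\|\boldsymbol{z}^k\|=1$; only the real scalar $\lambda := \prod_k\lambda_k$ enters $\boldsymbol{Y}$. For fixed unit directions, the objective is then a quadratic in $\lambda$,
$$
\tfrac{1}{2}\lambda^2\|\boldsymbol{z}^1\otimes\cdots\otimes\boldsymbol{z}^d\|^2 - \lambda\,\re\boldsymbol{F}(\boldsymbol{z}^1,\ldots,\boldsymbol{z}^d) + \tfrac{1}{2}\|\boldsymbol{\mathcal{F}}\|^2,
$$
minimized at $\lambda^\star = \re\boldsymbol{F}(\boldsymbol{z}^1,\ldots,\boldsymbol{z}^d)/\|\boldsymbol{z}^1\otimes\cdots\otimes\boldsymbol{z}^d\|^2$. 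Substituting back reduces \eqref{best rank-one approximation} to maximizing $(\re\boldsymbol{F})^2/\|\boldsymbol{z}^1\otimes\cdots\otimes\boldsymbol{z}^d\|^2$ over the unit sphere, which should then be identified with \eqref{tight1}.

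The main obstacle is the denominator. In the real and complex domains one has $\|\boldsymbol{z}^1\otimes\cdots\otimes\boldsymbol{z}^d\| = \prod_k\|\boldsymbol{z}^k\|$ and the reduction is immediate. In the commutative quaternion domain this multiplicativity \emph{fails} because of zero-divisors: e.g.\ $|1+\bj|\,|1-\bj|=2$ while $|(1+\bj)(1-\bj)|=0$, and more generally $\overline{\bp}\,\bp = |\bp|^2 + 2(p_0p_2+p_1p_3)\bj$ carries a nonreal $\bj$-component. The delicate step is therefore to show that the $\bj$-contribution cancels in the aggregated sum over indices for any stationary direction, so that $\|\boldsymbol{z}^1\otimes\cdots\otimes\boldsymbol{z}^d\|=1$ at the optimum and the two problems agree. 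I expect this to be handled either by a Lagrangian/KKT argument exploiting the super-symmetric multilinear structure of $\boldsymbol{F}$, or by a direct algebraic identity in the commutative ring $\mH$ tailored to the first-kind principal conjugation.
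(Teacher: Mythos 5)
Your reduction is sound, and indeed more careful than the paper's own argument, up to the point where you stop: the expansion of the squared distance, the identification of the cross term as $\re\boldsymbol{F}(\overline{\bx^1},\ldots,\overline{\bx^d})$ via the automorphism property of the first-kind conjugation, the removal of the conjugates by the norm-preserving substitution, and the elimination of the real scale $\lambda$ are all correct. (The paper's proof follows the same route but simply writes $\min_\lambda\big(\|\boldsymbol{\mathcal{F}}\|^2-2\lambda\re\boldsymbol{F}(\bx^1,\ldots,\bx^d)+\lambda^2\big)$, i.e.\ it takes the coefficient of $\lambda^2$ to be $1$ and drops the conjugation — exactly the two issues you isolate, of which only the conjugation one is harmless.)

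The final step, however, is a genuine gap, and it cannot be closed in the way you anticipate. What your argument actually establishes is equivalence of \eqref{best rank-one approximation} with maximizing $\big(\re\boldsymbol{F}(\boldsymbol{z}^1,\ldots,\boldsymbol{z}^d)\big)^2/\|\boldsymbol{z}^1\otimes\cdots\otimes\boldsymbol{z}^d\|^2$ over unit vectors; passing to \eqref{tight1} needs the denominator to be $1$ at the relevant optimizers, and no KKT or cancellation argument will deliver this, because it is false. Take $d=2$, $n_1=n_2=1$. For $\boldsymbol{\mathcal{F}}=1+\bj$ the maximizer of $\re\boldsymbol{F}$ over unit scalars is $\bx=\by=(1+\bj)/\sqrt2$, for which $\|\bx\otimes\by\|=|1+\bj|=\sqrt2\neq1$, so the hoped-for identity already fails at the optimum of \eqref{tight1}. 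Worse, for $\boldsymbol{\mathcal{F}}=\tfrac32+\tfrac12\bj$ the two problems have genuinely different solutions: problem \eqref{best rank-one approximation} is solved exactly (value $0$, e.g.\ by $\big(\tfrac32+\tfrac12\bj\big)\otimes 1$), whereas every maximizer of \eqref{tight1} satisfies $\bx\by=1+\bj$, and the best real multiple of $1+\bj$ leaves a residual of $1/4$; normalizing the exact rank-one factors also does not maximize \eqref{tight1} ($\sqrt{2.5}<2$). So the step you defer is precisely where the substantive difficulty lies, it is not available in the paper either (the paper assumes the multiplicativity of the norm that your zero-divisor example $|1+\bj|\,|1-\bj|\neq|(1+\bj)(1-\bj)|$ refutes), and as stated it cannot be repaired without modifying the claim or adding assumptions on $\boldsymbol{\mathcal{F}}$.
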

\begin{proof}
It is easy to see the problem \eqref{best rank-one approximation} is equivalent to 
$$
\min_{\lambda \in \mR, \Vert \boldsymbol{x}^k\Vert=1,k=1,\cdots,d}\frac{1}{2}\Vert \lambda\boldsymbol{x}^1\otimes \cdots \otimes \boldsymbol{x}^d-\boldsymbol{\mathcal{F}}\Vert^2,
$$
%When we fix all $\boldsymbol{x}^k$ with $\Vert \boldsymbol{x}^k\Vert=1$, 
and the optimal $\lambda$ satisfies
\begin{equation}
    \begin{split}
        \min_{\lambda\in\mR}\Vert \lambda\boldsymbol{x}^1\otimes \cdots \otimes \boldsymbol{x}^d-\boldsymbol{\mathcal{F}}\Vert^2
        &=\min_{\lambda\in \mR}\left( \Vert \boldsymbol{\mathcal{F}}\Vert^2-2\lambda\re\boldsymbol{F}(\bx^1,\cdots,\bx^d)+\lambda^2\right)\\
        &=\Vert \boldsymbol{\mathcal{F}}\Vert^2-(\re\boldsymbol{F}(\bx^1,\cdots,\bx^d))^2. \notag
    \end{split}
\end{equation}
 With multilinearity, we can solve the following problem 
 \begin{equation}
      \max_{\Vert \bx^k\Vert=1,k=1,\cdots,d}\vert \re\boldsymbol{F}(\bx^1,\cdots,\bx^d)\vert = \max_{\Vert \bx^k\Vert=1,k=1,\cdots,d} \re\boldsymbol{F}(\bx^1,\cdots,\bx^d)\notag
 \end{equation} 
to get the solution of \eqref{best rank-one approximation}.
\end{proof}

% Next, we will introduce a relevant conclusion about model \eqref{tight1}, which will be a bridge linking model \eqref{best rank-one approximation} and $(P)$.
To demonstrate the equivalence of model $(P)$, we first equate the problem \eqref{best rank-one approximation} 
\begin{equation*}
    \min_{\boldsymbol{x}^k \in \mH^{n_k},k=1,\cdots,d}\frac{1}{2}\Vert \boldsymbol{x}^1\otimes \cdots \otimes \boldsymbol{x}^d-\boldsymbol{\mathcal{F}}\Vert^2
\end{equation*}
to the following intermediary problem:
      \begin{equation}\label{tight2}
     \max_{\sum_{k=1}^{d}\Vert \bx^k\Vert^2=d}\re\boldsymbol{F}(\bx^1,\cdots,\bx^d).
 \end{equation}
 and then propose the equivalence between optimization problem (\ref{best rank-one approximation}) and model $(P)$.
% \begin{lemma}\label{le.equi with P}
%     The optimization problem \eqref{best rank-one approximation} is equivalent to the problem \eqref{tight2}.
% \end{lemma}
%  \begin{proof}
%      Suppose $(\bx^1_*,\cdots,\bx^d_* )$ is the optimal solution of (\ref{tight2}), then we have 
%      $$
%      \re \boldsymbol{F}\left( \frac{\bx^1_*}{\Vert \bx^1_*\Vert},\cdots,\frac{\bx^d_*}{\Vert \bx^d_*\Vert}\right)=\re \frac{\boldsymbol{F}(\bx^1_*,\cdots,\bx^d_*)}{\Pi_{i=1}^{d}\Vert \bx^i_*\Vert}\ge\re\boldsymbol{F}(\bx^1_*,\cdots,\bx^d_*),
%      $$
%      the above inequality is due to
%      $$
%      \left(\Pi_{i=1}^{d}\Vert \bx^i_*\Vert^2\right)^{1/d}\le \frac{1}{d}\sum_{i=1}^{d}\Vert \bx^i_*\Vert^2=1.
%      $$
%      Finally, since (\ref{tight2}) is a relaxation of (\ref{tight1}), the lemma holds.
%  \hfill $\Box$ \end{proof}
\setcounter{proposition}{1}
\renewcommand\theproposition{A.2}
\begin{proposition}[Equivalence with $(P)$]
    The optimization problem (\ref{best rank-one approximation}) is equivalent to the following problem:
    \begin{equation}\label{eq.equi with P}
        \max_{\Vert \bx\Vert^2=1,\bx\in\boldsymbol{H}^{\sum_{i=1}^{d}n^k}}\re\boldsymbol{H}(\bx).
    \end{equation}
 \end{proposition}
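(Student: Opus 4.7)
The plan is to route the equivalence through Proposition~\ref{prop.equi with F} and the intermediary problem~\eqref{tight2}, then perform a tensor-embedding step. First, I would argue that \eqref{tight1} and \eqref{tight2} share the same optimal value. For any $(\bx^1,\ldots,\bx^d)$ feasible for \eqref{tight2}, AM--GM yields $\prod_k \|\bx^k\| \le \bigl(\tfrac{1}{d}\sum_k\|\bx^k\|^2\bigr)^{d/2}=1$, so normalizing each factor to unit norm produces a point feasible for \eqref{tight1} whose $|\re\boldsymbol{F}|$-value dominates the original by multilinearity. Conversely, every feasible point of \eqref{tight1} is automatically feasible for \eqref{tight2}, giving the reverse inequality. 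Combined with Proposition~\ref{prop.equi with F}, this reduces \eqref{best rank-one approximation} to \eqref{tight2}.

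Next, I would construct a super-symmetric tensor $\boldsymbol{\mathcal{G}}\in\mH^{N^d}$, with $N=\sum_{k=1}^d n_k$, whose induced homogeneous polynomial recovers $\boldsymbol{F}$ when evaluated on a concatenated vector. Indexing the entries of $\bx\in\mH^N$ by pairs $(k,i)$ with $k\in\{1,\ldots,d\}$ and $i\in\{1,\ldots,n_k\}$, define
\begin{equation*}
\boldsymbol{\mathcal{G}}_{(k_1,i_1),\ldots,(k_d,i_d)} \;=\; \begin{cases}\dfrac{1}{d!}\,\boldsymbol{\mathcal{F}}_{i_{\pi(1)},\ldots,i_{\pi(d)}} & \text{if }(k_1,\ldots,k_d)\text{ is a permutation of }(1,\ldots,d),\\[2pt] 0 & \text{otherwise,}\end{cases}
\end{equation*}
where $\pi\in S_d$ is the unique permutation satisfying $k_{\pi(j)}=j$. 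Super-symmetry of $\boldsymbol{\mathcal{G}}$ is immediate from this definition. For $\bx=[\bx^1;\ldots;\bx^d]$, only terms whose block labels exhaust $\{1,\ldots,d\}$ contribute to $\boldsymbol{H}(\bx)=\boldsymbol{G}(\bx,\ldots,\bx)$; after a change of summation index $j_m := i_{\sigma^{-1}(m)}$ (for each permutation $\sigma$ parametrizing nonzero entries), the $d!$ permutations cancel the $1/d!$ normalization, yielding the key identity $\boldsymbol{H}(\bx)=\boldsymbol{F}(\bx^1,\ldots,\bx^d)$. This step crucially uses commutativity of $\mH$ to reorder the scalar monomials $\prod_j \bx^{\sigma(j)}_{i_j}$ freely.

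Finally, I would observe that $\|\bx\|^2=\sum_k\|\bx^k\|^2$ for concatenated $\bx$, and exploit $d$-homogeneity of $\boldsymbol{H}$: the bijection $\bx\mapsto\bx/\sqrt{d}$ maps $\{\sum_k\|\bx^k\|^2=d\}$ onto $\{\|\tilde\bx\|^2=1\}$ while scaling the objective by $d^{-d/2}$. Hence the optimizers of \eqref{eq.equi with P} and \eqref{tight2} coincide up to a positive scalar, and combined with the first step this establishes the equivalence with \eqref{best rank-one approximation}. The main technical obstacle will be the index bookkeeping in the identity $\boldsymbol{H}(\bx)=\boldsymbol{F}(\bx^1,\ldots,\bx^d)$: one must carefully track how the permutation $\sigma$ enumerating nonzero entries of $\boldsymbol{\mathcal{G}}$ interacts with reindexing the inner summation, and verify that the $d!$-fold sum exactly annihilates the $1/d!$ factor. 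In the non-commutative Hamiltonian setting the analogous identity would fail, which highlights why the restriction to commutative quaternions $\mH$ is essential to this construction.
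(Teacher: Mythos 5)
Your proposal is correct and follows essentially the same route as the paper: reduce \eqref{best rank-one approximation} to the intermediary problem \eqref{tight2} via the AM--GM normalization argument together with Proposition~\ref{prop.equi with F}, identify $\boldsymbol{F}(\bx^1,\ldots,\bx^d)$ with a homogeneous polynomial $\boldsymbol{H}$ in the concatenated variable, and rescale the sphere from radius $\sqrt{d}$ to $1$ by $d$-homogeneity. The only difference is that you make explicit the symmetrized tensor $\boldsymbol{\mathcal{G}}$ behind $\boldsymbol{H}$ (with the $1/d!$ bookkeeping), which the paper simply asserts exists.
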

 \begin{proof}
    First of all, we claim that if $(\bx^1_*,\cdots,\bx^d_*)$ is the optimal solution of the intermediary problem (\ref{tight2})
    , then $\left( \frac{\bx^1_*}{\Vert \bx^1_*\Vert},\cdots,\frac{\bx^d_*}{\Vert \bx^d_*\Vert}\right)$ serves as the solution of (\ref{tight1}). Consequently, solving (\ref{best rank-one approximation}) is equivalent to solving (\ref{tight2}). 
    Suppose $(\bx^1_*,\cdots,\bx^d_* )$ is the optimal solution of (\ref{tight2}), then we have 
     $$
     \re \boldsymbol{F}\left( \frac{\bx^1_*}{\Vert \bx^1_*\Vert},\cdots,\frac{\bx^d_*}{\Vert \bx^d_*\Vert}\right)=\re \frac{\boldsymbol{F}(\bx^1_*,\cdots,\bx^d_*)}{\Pi_{i=1}^{d}\Vert \bx^i_*\Vert}\ge\re\boldsymbol{F}(\bx^1_*,\cdots,\bx^d_*),
     $$
     the above inequality is due to
     $$
     \left(\Pi_{i=1}^{d}\Vert \bx^i_*\Vert^2\right)^{1/d}\le \frac{1}{d}\sum_{i=1}^{d}\Vert \bx^i_*\Vert^2=1.
     $$
     Finally, since (\ref{tight2}) is a relaxation of (\ref{tight1}), the claim holds.
     
     With the above claim, we can give the equivalence between optimization problem (\ref{best rank-one approximation}) and model $(P)$. Let $\bx=((\bx^1)^\top,\cdots,(\bx^d)^\top)^\top\in\boldsymbol{H}^{\sum_{i=1}^{d}n^k}$. Since $\boldsymbol{F}(\bx^1,\cdots,\bx^d)$ is a $d$-degree homogeneous polynomial, we can find a $\sum_{i=1}^{d}n^k$-dimensional $d$-th degree homogeneous polynomial function $\boldsymbol{H}(\bx)$ such that
$$
\boldsymbol{H}(\bx)=\boldsymbol{F}(\bx^1,\cdots,\bx^d).
$$
Thus, we have 
$$
\max_{\sum_{k=1}^{d}\Vert \bx^k\Vert^2=d}\re\boldsymbol{F}(\bx^1,\cdots,\bx^d)=\max_{\Vert \bx\Vert^2=d}\re\boldsymbol{H}(\bx)=\max_{\Vert \bx\Vert^2=1}\re\boldsymbol{H}(\sqrt{d}\bx)
$$
which can be solved by the homogeneous polynomial model (\ref{eq.equi with P}).
\end{proof}

\section*{Appendix B: Proof of Proposition \ref{prop.epsilon version}}\label{app.B}
\textbf{Proof of Proposition \ref{prop.epsilon version}:}

By the similar argument in \ref{thm.constant probability inequality}, we obtain that $\text{Re}(\boldsymbol{\eta}_1) \sim \mathcal{N}(0, 1)$ and $\|\boldsymbol{\eta}\|^2$ follows real chi-squared distribution $\chi_{4 n}^{2}$. For the term $\text{Re}(\boldsymbol{\eta}_1)$, the lower limit of the integral can be relaxed with respect to $\delta$,
$$
\begin{aligned}
\text{Prob}\left\{\text{Re}(\boldsymbol{\eta_1}) \geq 2 \sqrt{\gamma \ln n}\right\} &=\int_{2 \sqrt{\gamma \ln n}}^{+\infty} \frac{1}{\sqrt{2 \pi}} e^{-x^{2} / 2} d x \\
& \geq \int_{\left(2+\delta\right) \sqrt{\gamma \ln n}}^{4 \sqrt{\gamma \ln n}} \frac{1}{\sqrt{2 \pi}} e^{-x^{2} / 2} d x \\
& \geq \int_{\left(2+\delta\right) \sqrt{\gamma \ln n}}^{4 \sqrt{\gamma \ln n}} \frac{1}{\sqrt{2 \pi}} \frac{x}{4 \sqrt{\gamma \ln n}} e^{-x^{2} / 2} d x \\
&=\frac{1}{\sqrt{32 \pi \gamma \ln n}}\left(\frac{1}{n^{(2 + \delta + \delta^2/2) \gamma}}-\frac{1}{n^{8 \gamma}}\right).
\end{aligned}
$$
And for the term $\|\boldsymbol{\eta}\|^2$, let $b$ to be the all-one vector and $t = \frac{\delta^2}{2}n$ in the inequality \eqref{inequality.Laurent and Massart} leads to
$$
\text{Prob}\left\{\|\boldsymbol{\eta}\|^{2} \geq 2\sqrt{2}\delta n + \delta^2 n + 4n\right\} \leq e^{-\frac{\delta^2}{2}n}.
$$
Combine with these two inequalities, we obtain that
$$
\begin{aligned}
\text{Prob}\left\{\text{Re}\left(\boldsymbol{a}^\top \boldsymbol{\xi}\right) \geq \sqrt{\frac{\gamma \ln n}{n}}\right\} 
& = \text{Prob}\left\{\frac{ \text{Re}(\boldsymbol{\eta}_{1})}{ \|\boldsymbol{\eta}\|} \geq \sqrt{\frac{\gamma \ln n}{n}}\right\} \\
& \geq \text{Prob}\left\{\text{Re}(\boldsymbol{\eta}_{1}) \geq \left(2+\delta\right) \sqrt{\gamma \ln n},\|\boldsymbol{\eta}\| \leq \left(2+\delta\right) \sqrt{n}\right\} \\
& \geq \text{Prob}\left\{\text{Re}(\boldsymbol{\eta}_{1}) \geq \left(2+\delta\right) \sqrt{\gamma \ln n}\right\} - \text{Prob}\left\{\|\boldsymbol{\eta}\| \geq \left(2+\delta\right) \sqrt{n}\right\} \\
& \geq \frac{1}{\sqrt{32 \pi \gamma \ln n}}\left(\frac{1}{n^{(2 + \delta + \delta^2/2) \gamma}}-\frac{1}{n^{8 \gamma}}\right) -  e^{-\frac{\delta^2}{2}n}.
\end{aligned}
$$
Therefore, there exists $n(\gamma, \delta)>0$, depending on $\gamma$ and $\delta$, such that
$$
\begin{aligned}
    \text{Prob}\left\{\text{Re}\left(\boldsymbol{a}^\top \boldsymbol{\xi}\right) \geq \sqrt{\frac{\gamma \ln n}{n}}\right\} &\geq \frac{1}{\sqrt{32 \pi \gamma \ln n}}\left(\frac{1}{n^{(2 + \delta + \delta^2/2) \gamma}}-\frac{1}{n^{8 \gamma}}\right) -  e^{-\frac{\delta^2}{2}n}\\
    &\geq \frac{1}{2 n^{(2 + \delta + \delta^2/2) \gamma} \sqrt{32 \pi \gamma \ln n}} \quad \forall n \geq n(\gamma, \delta).
\end{aligned}
$$
On the other hand, $0<\gamma<n / \ln n$ implies that $\text{Prob}\left\{\text{Re}\left(\boldsymbol{a}^\top \boldsymbol{\xi}\right) \geq \sqrt{\frac{\gamma \ln n}{n}}\right\}>0$. Therefore,
$$
\min _{n<n(\gamma, \delta), \gamma \ln n<n, n \in \mathbb{Z}} \text{Prob}\left\{\text{Re}\left(\boldsymbol{a}^\top \boldsymbol{\xi}\right) \geq \sqrt{\frac{\gamma \ln n}{n}}\right\} \cdot n^{(2 + \delta + \delta^2/2) \gamma} \sqrt{\ln n}=t(\gamma, \delta)>0,
$$
where $t(\gamma, \delta)$ depends on $\gamma$ and $\delta$. Finally, letting $c(\gamma, \delta)=\min \{t(\gamma, \delta), 1 /(2 \sqrt{32 \gamma \pi})\}$ proves the lemma.

\section*{Appendix C: Proof of Proposition \ref{theorem.upper bound}}\label{app.C}
 To prove Proposition \ref{theorem.upper bound}, we need the following auxiliary lemma and corollary.
\setcounter{lemma}{0}
\renewcommand\thelemma{B.1}
 \begin{lemma}{\rm (\cite{kolda2001orthogonal})}\label{le.numerical lemma}
    Let $\mathcal{T} = \sum_{i=1}^{r}\lambda_i x_i\otimes y_i \otimes z_i$, $\|x_i\| = \|y_i\|= \|z_i\| =1$, $i = 1,2,\cdots,r$, and $(x_i^Tx_j)(y_i^Ty_j) = z_i^Tz_j = 0$ for $i \neq j$. The optimization problem
    \[
        \begin{array}{cc}
            \max & \mathcal{T}(x,y,z) \\
            \text{s.t.} & \|x\| = 1, \|y\| = 1, \|z\| = 1.\\
        \end{array}
    \]
    has the optimal value $v^* = \max{\lambda_i}$, where $\lambda_i \in \mR$, $\mathcal{T}\in \mR^{n_1 \times n_2 \times n_3}$, $x_i\in \mR^{n_1}$, $y_i\in \mR^{n_2}$, $z_i\in \mR^{n_3}$.
\end{lemma}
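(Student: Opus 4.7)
The plan is to bound $\mathcal{T}(x,y,z)$ from above by $\max_i \lambda_i$ using Cauchy--Schwarz twice, and then exhibit a unit-norm triple that attains this value exactly. First I would write the objective in coordinates by setting $a_i := x_i^\top x$, $b_i := y_i^\top y$, $c_i := z_i^\top z$, so that multilinearity gives $\mathcal{T}(x,y,z) = \sum_{i=1}^r \lambda_i\, a_i b_i c_i$. Without loss of generality I assume $\lambda_i \ge 0$, since a negative $\lambda_i$ can be absorbed by flipping the sign of $x_i$, which leaves the hypothesis $(x_i^\top x_j)(y_i^\top y_j) = 0$ intact.

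The key step is to establish $\sum_{i=1}^r |a_i b_i c_i| \le 1$. The trick is to lift the first two factors into $\mathbb{R}^{n_1 n_2}$ by observing that the Kronecker products $\{x_i \otimes y_i\}_{i=1}^r$ form an orthonormal system: each has norm $\|x_i\|\,\|y_i\|=1$, and for $i \ne j$ the inner product factors as $\langle x_i \otimes y_i,\ x_j \otimes y_j\rangle = (x_i^\top x_j)(y_i^\top y_j) = 0$ by hypothesis. Bessel's inequality applied to the vector $x \otimes y \in \mathbb{R}^{n_1 n_2}$ then yields $\sum_i (a_i b_i)^2 \le \|x \otimes y\|^2 = \|x\|^2\|y\|^2 = 1$. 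Since $\{z_i\}_{i=1}^r$ is orthonormal in $\mathbb{R}^{n_3}$, the same argument gives $\sum_i c_i^2 \le 1$. One application of Cauchy--Schwarz to the sequences $(a_i b_i)_i$ and $(c_i)_i$ combines these into $\sum_i |a_i b_i c_i| \le 1$, and hence $\mathcal{T}(x,y,z) \le (\max_i \lambda_i)\sum_i |a_i b_i c_i| \le \max_i \lambda_i$.

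For the matching lower bound I would substitute $x=x_{i^*}$, $y=y_{i^*}$, $z=z_{i^*}$ where $i^* \in \arg\max_i \lambda_i$. Every cross term $\lambda_j (x_j^\top x_{i^*})(y_j^\top y_{i^*})(z_j^\top z_{i^*})$ with $j \ne i^*$ vanishes by the orthogonality conditions (the product $(x_j^\top x_{i^*})(y_j^\top y_{i^*})$ is zero), leaving $\mathcal{T}(x_{i^*}, y_{i^*}, z_{i^*}) = \lambda_{i^*} \|x_{i^*}\|^2 \|y_{i^*}\|^2 \|z_{i^*}\|^2 = \max_i \lambda_i$. Since this triple is feasible, the upper bound is tight.

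The main obstacle is recognizing that the joint hypothesis $(x_i^\top x_j)(y_i^\top y_j) = 0$---which does not require either the $x_i$'s or the $y_i$'s to be individually orthogonal---is exactly what is needed to make the Kronecker system $\{x_i \otimes y_i\}$ orthonormal in the ambient space $\mathbb{R}^{n_1 n_2}$. Once this lifting is identified, the remainder is two standard applications of Bessel/Cauchy--Schwarz combined with the straightforward attainment argument.
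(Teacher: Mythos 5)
Your proof is correct, but there is nothing in the paper to compare it against: the paper imports this lemma verbatim from \cite{kolda2001orthogonal} and gives no internal proof. Your argument is a clean, self-contained reconstruction of the standard one. The decisive observation — that the joint hypothesis $(x_i^\top x_j)(y_i^\top y_j)=0$, which orthogonalizes neither family individually, is precisely orthonormality of the Kronecker system $\{x_i\otimes y_i\}\subset\mathbb{R}^{n_1 n_2}$ — is the right lifting; Bessel applied to $x\otimes y$ and to $z$, followed by one Cauchy--Schwarz, gives $\sum_i|a_ib_ic_i|\le 1$, and attainment at $(x_{i^*},y_{i^*},z_{i^*})$ is immediate since the cross terms vanish. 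This is exactly what the lemma needs, and it is essentially Kolda's proof of the corresponding spectral-norm result for orthogonally decomposable tensors.

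One caveat deserves attention: your ``without loss of generality $\lambda_i\ge 0$'' is not innocent relative to the \emph{literal} statement, because flipping the sign of $x_i$ replaces $\lambda_i$ by $-\lambda_i$ and therefore changes $\max_i\lambda_i$. As stated, with $\lambda_i\in\mathbb{R}$ unrestricted, the conclusion $v^*=\max_i\lambda_i$ is in fact false: take $r=2$, $\lambda_1=1$, $\lambda_2=-5$, $x_1=x_2=e_1$, $y_1=e_1$, $y_2=e_2$, $z_1=e_1$, $z_2=e_2$ (the hypotheses hold since $(x_1^\top x_2)(y_1^\top y_2)=0=z_1^\top z_2$); then $\mathcal{T}(e_1,e_2,-e_2)=5>\max_i\lambda_i=1$. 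The correct value in general is $\max_i|\lambda_i|$, which is what your argument actually proves. This is a defect of the statement rather than of your proof — Kolda's theorem is formulated with positive weights — and the paper itself needs the $\max_i|\lambda_i|$ version, since its Corollary B.1 applies the lemma to $-\mathbf{1}_{n_1\times n_2\times n_3}$, whose weight is negative. You should simply say explicitly that you prove $v^*=\max_i|\lambda_i|$, which reduces to the stated claim when all $\lambda_i\ge 0$.
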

With the above lemma, we can get the following corollary.
\setcounter{corollary}{0}
\renewcommand\thecorollary{B.1}
\begin{corollary}\label{coro.example}
    For the all-ones tensor $\textbf{1}_{n_1 \times n_2 \times n_3}$, the optimization problem:
    \begin{equation}
    \begin{array}{cc}
    \max & \textbf{1}_{n_1 \times n_2 \times n_3}(x,y,z)\\
     \text{s.t.} & \|x\|^2=1,\|y\|^2=1,\|z\|^2=1
     \end{array}
    \end{equation}
    and 
    \begin{equation}
    \begin{array}{cc}
    \max & -\textbf{1}_{n_1 \times n_2 \times n_3}(x,y,z)\\
     \text{s.t.} & \|x\|^2=1,\|y\|^2=1,\|z\|^2=1
     \end{array}
    \end{equation}
has the optimal value $v^* = \sqrt{lmn}$, where $x\in \mR^l$, $y\in \mR^m$, $z\in \mR^n$.
\end{corollary}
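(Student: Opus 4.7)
The plan is to apply Lemma B.1 directly by exhibiting a rank-one decomposition of the all-ones tensor that satisfies the hypotheses of that lemma. Write
\[
\mathbf{1}_{n_1\times n_2\times n_3} \;=\; \sqrt{n_1 n_2 n_3}\cdot u^1\otimes u^2\otimes u^3,
\]
where $u^k = \mathbf{1}_{n_k}/\sqrt{n_k}$ for $k=1,2,3$ is the unit all-ones vector in $\mathbb{R}^{n_k}$. This is a single-term decomposition with $r=1$ and $\lambda_1 = \sqrt{n_1 n_2 n_3}$, and the orthogonality hypothesis $(x_i^\top x_j)(y_i^\top y_j)=z_i^\top z_j=0$ for $i\neq j$ required by Lemma B.1 is vacuously satisfied. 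Applying Lemma B.1 immediately yields that the first optimization problem has optimal value $\max_i\lambda_i = \sqrt{n_1 n_2 n_3}$, with optimal triple $(u^1,u^2,u^3)$.

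For the second problem, involving $-\mathbf{1}_{n_1\times n_2\times n_3}$, I will use the identical argument with the decomposition
\[
-\mathbf{1}_{n_1\times n_2\times n_3} \;=\; \sqrt{n_1 n_2 n_3}\cdot(-u^1)\otimes u^2\otimes u^3,
\]
which again is a single-term decomposition with $\lambda_1 = \sqrt{n_1 n_2 n_3}$ and unit-norm factor vectors, so Lemma B.1 yields the same optimal value. Equivalently, one can observe by symmetry of the feasible set under $x\mapsto -x$ that the two maximization problems have the same optimal value, so only one needs to be computed from scratch.

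To be explicit about attainment and to make the proposal self-contained, I will also verify by direct computation that $\mathbf{1}_{n_1\times n_2\times n_3}(u^1,u^2,u^3) = \sum_{i,j,k} u^1_i u^2_j u^3_k = \sqrt{n_1}\cdot\sqrt{n_2}\cdot\sqrt{n_3} = \sqrt{n_1 n_2 n_3}$, which matches the bound from Lemma B.1 and confirms tightness. No genuine obstacle is anticipated: the whole argument is essentially the observation that the all-ones tensor is rank-one with known singular value, so Lemma B.1 applies in its simplest instance. The only minor care needed is to notice the notational mismatch in the statement (the variables are asserted to lie in $\mathbb{R}^l,\mathbb{R}^m,\mathbb{R}^n$ while the tensor is indexed by $n_1,n_2,n_3$), which I will treat as a typo and read as $x\in\mathbb{R}^{n_1}$, $y\in\mathbb{R}^{n_2}$, $z\in\mathbb{R}^{n_3}$ with optimal value $\sqrt{n_1 n_2 n_3}$.
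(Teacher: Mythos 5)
Your proof is correct and follows essentially the same route as the paper: both exhibit the rank-one decomposition $\mathbf{1}_{n_1\times n_2\times n_3}=\sqrt{n_1n_2n_3}\,(\mathbf{1}_{n_1}/\sqrt{n_1})\otimes(\mathbf{1}_{n_2}/\sqrt{n_2})\otimes(\mathbf{1}_{n_3}/\sqrt{n_3})$ and invoke Lemma B.1 with $r=1$, where the orthogonality hypothesis is vacuous. Your explicit treatment of the $-\mathbf{1}$ case via the sign-flipped factor $(-u^1)$ and your remark on the $l,m,n$ versus $n_1,n_2,n_3$ typo merely fill in details the paper leaves implicit.
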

\begin{proof}
    Let $\textbf{1}_{n_i}\in \mR^{n_i}(i=1,2,3)$ is the all-ones vector, it follows that
    $$
        \textbf{1}_{n_1 \times n_2 \times n_3}=\textbf{1}_{n_1}\otimes \textbf{1}_{n_2} \otimes \textbf{1}_{n_3} = \sqrt{n_1n_2n_3}(\textbf{1}_{n_1}/\sqrt{n_1})\otimes(\textbf{1}_{n_2}/\sqrt{n_2})\otimes(\textbf{1}_{n_3}/\sqrt{n_3})
    $$
    satisfies the conditions in Lemma \ref{le.numerical lemma}. Then the corollary holds.
\end{proof}
\textbf{Proof of Proposition \ref{theorem.upper bound}:}

With the conditions of Proposition \ref{theorem.upper bound} and the quaternion multiplication rules, problem (F) can be equivalently converted into
\begin{equation}\label{problem.simplified}
    \begin{array}{cc}
    \max & \mathcal{T}_0(x_0,y_0,z_0)- \mathcal{T}_0(x_1.y_1,z_0)+\mathcal{T}_0(x_2,y_2,z_0)- \mathcal{T}_0(x_3,y_3,z_0)\\
     & -\mathcal{T}_0(x_1,y_0,z_1)-\mathcal{T}_0(x_0,y_1,z_1)-\mathcal{T}_0(x_3,y_2,z_1)-\mathcal{T}_0(x_2,y_3,z_1)\\
     & + \mathcal{T}_0(x_2,y_0,z_2)-\mathcal{T}_0(x_3,y_1,z_2)+\mathcal{T}_0(x_0,y_2,z_2)-\mathcal{T}_0(x_1,y_3,z_2)\\
     & -\mathcal{T}_0(x_3,y_0,z_3)-\mathcal{T}_0(x_2,y_1,z_3)-\mathcal{T}_0(x_1,y_2,z_3)-\mathcal{T}_0(x_0,y_3,z_3)\\
     \text{s.t.} & \|x_0\|^2+\|x_1\|^2+\|x_2\|^2+\|x_3\|^2=1\\
     & \|y_0\|^2+\|y_1\|^2+\|y_2\|^2+\|y_3\|^2=1\\
     & \|z_0\|^2+\|z_1\|^2+\|z_2\|^2+\|z_3\|^2=1
     \end{array}
\end{equation}
where $\mathcal{T}_0=\textbf{1}_{n_1 \times n_2 \times n_3}$.
From Corollary \ref{coro.example}, it is easy to know the optimal value of the following problem (\ref{problem.upper}) is the upper bound for problem (\ref{problem.simplified}). 
\begin{equation}\label{problem.upper}
    \begin{array}{cc}
    \max & \sqrt{n_1n_2n_3}\big(\|x_0\|\|y_0\|\|z_0\|+ \|x_1\|\|y_1\|\|z_0\|+\|x_2\|\|y_2\|\|z_0\|+ \|x_3\|\|y_3\|\|z_0\|\\
     & +\|x_1\|\|y_0\|\|z_1\|+\|x_0\|\|y_1\|\|z_1\|+\|x_3\|\|y_2\|\|z_1\|+\|x_2\|\|y_3\|\|z_1\|\\
     & + \|x_2\|\|y_0\|\|z_2\|+\|x_3\|\|y_1\|\|z_2\|+\|x_0\|\|y_2\|\|z_2\|+\|x_1\|\|y_3\|\|z_2\|\\
     & +
     \|x_3\|\|y_0\|\|z_3\|+\|x_2\|\|y_1\|\|z_3\|+\|x_1\|\|y_2\|\|z_3\|+\|x_0\|\|y_3\|\|z_3\|\big)\\
     \text{s.t.} & \|x_0\|^2+\|x_1\|^2+\|x_2\|^2+\|x_3\|^2=1\\
     & \|y_0\|^2+\|y_1\|^2+\|y_2\|^2+\|y_3\|^2=1\\
     & \|z_0\|^2+\|z_1\|^2+\|z_2\|^2+\|z_3\|^2=1
     \end{array}
\end{equation}
and the optimal value of problem (\ref{problem.upper}) can be easily obtained which is $v_{\text{upper}} = 2\sqrt{n_1n_2n_3}$.

\bibliographystyle{abbrvnat}
\bibliography{main}
% \section*{Appendix}
% \appendix
% \newtheorem*{proposition*}{Proposition}
\end{document}